\def\N{\mathbb{N}}
\def\F{\mathbb{F}}
\def\deg{\mathrm{deg}}
\def\int{\mathrm{Int}}
\def\deg{\mathrm{deg}}
\def\cal{\mathcal}
\theoremstyle{plain}
\newtheorem{theo}{Theorem}[section]
\newtheorem{lem}[theo]{Lemma}
\newtheorem{coro}[theo]{Corollary}
\newtheorem{conj}[theo]{Conjecture}
\theoremstyle{remark}
\newtheorem{defi}[theo]{Definition}
\newtheorem{question}[theo]{Question}
\title{On the stopping time of the Collatz map in $\F_2[x]$}
\author{Gil Alon}
\email{gilal@openu.ac.il}
\address{Department of Mathematics and Computer Science, The Open University of Israel}
\author{Angelot Behajaina}
\email{abeha@campus.technion.ac.il}
\address{Department of Mathematics, Technion - Israel Institute of Technology, Haifa, Israel}
\address{Department of Mathematics and Computer Science, The Open University of Israel}
\author{Elad Paran}
\email{paran@openu.ac.il}
\address{Department of Mathematics and Computer Science, The Open University of Israel}
\begin{document}
\maketitle

\noindent

\begin{abstract}
    We study the stopping time of the Collatz map for a polynomial $f \in \F_2[x]$, and bound it by $O(\deg (f)^{1.5})$, improving upon the quadratic bound proven by Hicks, Mullen, Yucas and Zavislak. We also prove the existence arithmetic sequences of unbounded length in the stopping times of certain sequences of polynomials, a phenomenon observed in the classical Collatz map.
    
\end{abstract}

\maketitle
\section{Introduction}

The classical Collatz map is defined, for any natural number $n$, by ${\cal{C}}(n) = n/2$ if $n$ is even, and ${\cal{C}}(n) = 3n+1$ if $n$ is odd. The Collatz conjecture states that for any $n \geq 1$ there exists $k \geq 1$ for which ${\cal{C}}^k(n) = 1$. The minimal such $k$, if exists, is called the {\bf stopping time}\footnote{In some texts this is known as the {\bf total} stopping time.} of $n$ with respect to ${\cal{C}}$. If no such $k$ exists, one says that the stopping time of $n$ is infinite. Thus the Collatz conjecture states that the stopping time of any $n \in \N$ is finite. While some interesting partial results have been obtained (most notably in the work \cite{Tao2022} of Tao), the conjecture remains wide open and a proof is considered far from reach. For a survey on the history of this problem, see \cite{Lagarias2010}.

As in the case of many arithmetic questions, it is natural to look for analogues in polynomial rings. Such an analogue was studied in 2008 by Hicks, Mullen, Yucas and Zavislak in \cite{Hicks}, who replaced the integer prime $2$ with the prime $x$ of the ring $\mathbb{F}_2[x]$, thus defining the {\bf polynomial Collatz map} $T \colon \mathbb{F}_2[x]\rightarrow \mathbb{F}_2[x]$ by
\begin{equation}\label{eq:defcollmap}
T(f)=\begin{cases}
(1+x)f+1 & f\,\text{is odd},\\
f/x & f\,\text{is even}.
\end{cases}
\end{equation}
Here, a polynomial $f\in \mathbb{F}_2[x]$ is called \emph{odd} if $f(0)=1$, and
\emph{even} if $f(0)=0$. That is, $f$ is even if and only if $f$ is divisible by $x$.

For any non-zero $f \in \mathbb{F}_2[x]$, we define the stopping time of $f$ (with respect to the above map) as the minimal $k \geq 1$ for which $T^k(f) = 1$. The main result of Hicks et al. is that for any non-zero polynomial $f\in \mathbb{F}_2[x]$, the stopping time of $f$ is finite. Moreover, they prove that this stopping time is at most $\deg(f)^2+2\deg(f)$. 

It is interesting to compare this result with conjectures concerning the stopping time of the arithmetic Collatz map ${\cal{C}}$, where the analogue of the degree of a polynomial is the number of digits of a number $n$, i.e. $\log(n)$. Based on heuristic probabilistic arguments, both Crandall \cite{Crandall} and Shanks (see \cite[p.~13]{Lagarias85}) conjectured that the {\bf average} stopping time of a natural number $n$ grows linearly in $\log(n)$. We do not know if a similar linear bound holds for the average, or maximal, stopping time of polynomials in $\F_2[x]$ of a given degree (see Question \ref{worse} and Conjecture \ref{average} below). However, we are able to significantly improve upon the bound of Hicks et al. The main result of this work is the following:

\begin{theo}\label{main}
There exists a constant $c$ such that for all $f \in \F_2[x]$, the stopping time of $f$ with respect to the Collatz map is at most $c\cdot \deg(f)^{1.5}$. 
\end{theo}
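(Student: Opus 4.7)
The stopping time of $f$ equals $n + 2m$, where $n = \deg(f)$ and $m$ is the number of odd Collatz steps performed: odd steps raise the degree by $1$, even steps lower it by $1$, and reaching the polynomial $1$ (of degree $0$) requires exactly $m$ odd and $n+m$ even steps. It therefore suffices to prove $m \le c \cdot n^{1.5}$.

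Group the orbit into $m$ Syracuse blocks, each consisting of one odd step followed by $k_i \ge 1$ consecutive even steps; set $K_i = k_1 + \cdots + k_i$, so $K_m = n + m$. A routine induction yields
\[
(1+x)^m f = x^{K_m} + \sum_{i=0}^{m-1} (1+x)^{m-1-i} x^{K_i}.
\]
The value $k_i$ equals the length of the leading run of $1$s in the $(i-1)$-st Syracuse iterate; a block with $k_i = 1$ (a \emph{flat} block) leaves the degree unchanged, while a block with $k_i \ge 2$ drops the degree by $k_i - 1$. Since the total degree drop is $\sum_i (k_i - 1) = n$, at most $n$ blocks are non-flat, and the problem reduces to bounding the number of flat blocks by $O(n^{1.5})$.

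My approach is an amortized argument: I would construct a potential function $\Phi$ on odd polynomials with $\Phi(1) = 0$, such that (i) $\Phi$ decreases by at least $1$ per Syracuse block, and (ii) $\Phi(f) \le c \cdot n^{1.5}$ for every $f$ of degree $n$. The heuristic is that a flat run of length $L$ at an iterate $g$ forces $g = 1 + x^{L+1} h$ for some $h$, and the displayed identity then determines the subsequent value of $k$ in terms of specific bits of the tail $h$; thus long flat runs commit a substantial portion of $g$ to a predetermined pattern, which should limit how often they can occur across the orbit.

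The main obstacle is constructing $\Phi$ correctly and verifying (i) and (ii) rigorously. A naive choice that weights the current degree by the length of the leading zero-run is defeated by extremal examples like $f = 1 + x^n$, which has a leading zero-run of size $n - 1$ but only linear stopping time. The correct $\Phi$ should likely combine the current degree with a sum over tail bit-positions, and bounding (ii) uniformly will require delicate binomial-coefficient estimates modulo $2$ (via Lucas's theorem) applied to the identity. An alternative route, which I would keep in reserve, is to prove directly that within any $O(n)$ consecutive Syracuse blocks the degree must drop by at least $\Omega(\sqrt{n})$, and then iterate this with the Hicks--Mullen--Yucas--Zavislak quadratic bound once the degree has fallen below $\sqrt{n}$.
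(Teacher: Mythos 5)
Your reduction is sound and essentially parallels the paper's: the identity $t_{\min}(f)=\deg(f)+2m$ with $m$ the number of odd steps is exactly equation (\ref{eq:t_min}) in disguise, your block decomposition matches the map $T_3$ (equivalently $S_3$ after coefficient reversal), the telescoped identity $(1+x)^m f = x^{K_m}+\sum_{i=0}^{m-1}(1+x)^{m-1-i}x^{K_i}$ is correct, and the observation that at most $n$ blocks are non-flat (since $\sum_i(k_i-1)=n$) is right. But at that point the proof stops. The entire content of the theorem is the bound of $O(n^{1.5})$ on the number of flat blocks, and for that you offer only a program: ``construct a potential function $\Phi$'' satisfying two properties you have not verified, together with your own admission that ``the main obstacle is constructing $\Phi$ correctly.'' You even exhibit a natural candidate that fails ($f=1+x^n$). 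A proposal whose central step is an unconstructed potential function is not a proof; neither is the alternative you ``keep in reserve'' (that any $O(n)$ consecutive blocks force a degree drop of $\Omega(\sqrt{n})$), which is asserted without argument and is in fact the hard part restated.

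For comparison, the mechanism the paper uses to close this gap is an algebraic identity special to characteristic $2$ rather than an amortized potential. Working with the reversed polynomials and the map $S_3$ (multiply by $x+1$, delete the leading term), one shows (Lemma \ref{lem:n-r}) that if the gap between the top two terms is $r$, then after $r-1$ steps the iterate is $(x+1)^{r-1}g$ with $\deg g = n-r$; and crucially (Lemma \ref{lem:estimate}) that an accumulated factor $(x+1)^r$ can be eliminated in at most $2\deg(f)-r$ further steps, because $(x+1)^{2^s}=x^{2^s}+1$ and the $x^{2^s}$ part is invisible after truncation. The induction then splits on whether $r\le\sqrt{2n}$ (pay $O(\sqrt{n})$ and reduce the degree by $1$) or $r>\sqrt{2n}$ (pay $O(n)$ and reduce the degree by more than $\sqrt{2n}$), and both cases close under the bound $\sqrt{2}\,n^{1.5}$. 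Nothing in your proposal plays the role of the Frobenius identity $(x+1)^{2^s}=x^{2^s}+1$, and without some such structural input the bit-pattern heuristics about ``committing a substantial portion of $g$ to a predetermined pattern'' do not yield a quantitative bound. If you want to salvage your framework, the two-case induction on the degree-gap $r$, combined with the fast-forwarding trick, is the missing ingredient.
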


Our second result is related to a phenomenon observed in the stopping times of certain numbers with respect to the classical map ${\cal{C}}$. It was observed in \cite{MO} that the the sequence of stopping times of integers of the form $2^n+1$ contains arithmetic progressions of increasing length as $n \to \infty$. The same phenomenon is observed in other similar sequences, which lead to the following conjecture in \cite{MO}: 

\begin{conj}\label{conj_main} For fixed integers $a,b \geq 0$, the sequence of stopping times of the Collatz map of numbers of the form $(2^a3^b)^n+1$ contains arithmetic sequences of unbounded length, and with common difference $a-b$. \end{conj}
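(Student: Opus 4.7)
Write $R := 2^a 3^b$ and $N_n := R^n + 1$. The plan is to exploit the fact that, as $n$ grows, the numbers $N_n$ approach $1$ in the $2$-adic (or $3$-adic) topology, forcing their Collatz trajectories to share increasingly long common initial segments.

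Assume $a \geq 1$ for concreteness, so that $N_n$ is odd. A direct computation gives $\cal{C}(N_n) = 3R^n + 4$, and as long as $an \geq 2$ two further halvings produce
\[
\cal{C}^3(N_n) \;=\; 2^{an-2}\cdot 3^{bn+1} + 1.
\]
Iterating this identity yields the closed form
\[
\cal{C}^{3k}(N_n) \;=\; 2^{an-2k}\cdot 3^{bn+k} + 1, \qquad 0 \leq k \leq \left\lfloor \tfrac{an-1}{2} \right\rfloor,
\]
so the first roughly $\tfrac{3}{2}an$ Collatz steps of the trajectory of $N_n$ are completely explicit; call this the \emph{regular phase}. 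The case $a=0$, $b \geq 1$ is handled by a dual computation starting from the even number $3^{bn}+1$.

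Second, I would compare the trajectories of $N_n$ and $N_{n+d}$ for fixed $d \geq 1$. Since $R^{n+d} - R^n = R^n(R^d-1)$ and $R^d-1$ is odd when $a \geq 1$, one has $\nu_2(N_{n+d}-N_n) = an$. Because the parity sequence of $\cal{C}$ at a number depends only on finitely many $2$-adic digits at each step (each halving loses one bit of agreement, each tripling preserves it), the parity sequences of $\cal{C}$ applied to $N_n$ and to $N_{n+d}$ coincide for a number of initial steps growing linearly with $n$. Restricting $n$ to a residue class $n \equiv n_0 \pmod P$, the post-regular tails of $N_n$ and $N_{n+d}$ should couple into trajectories of a common $2$-adic Collatz orbit. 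A careful bookkeeping of the extra halving steps induced by the factor $2^{ad}$ (contributing $+a\cdot d$ to the stopping time) against the shifted entry point into the tail induced by $3^{bd}$ (which, through a compensating shift in the regular-phase index $k$, contributes $-b\cdot d$) should yield the net stopping-time difference $(a-b)d$ predicted by the conjecture.

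Third, since the common parity prefix grows without bound with $n$, arbitrarily long initial segments of the sequence $(s(n))_{n \equiv n_0 \pmod P}$ should form an exact arithmetic progression of common difference $a-b$, giving progressions of unbounded length as required.

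The principal obstacle is justifying that the coupled post-regular tails actually merge into a common trajectory: the tail of $\cal{C}^{3k}(N_n)$ at the end of the regular phase is still a number of size comparable to $N_n$ itself, so its long-term Collatz behaviour is not controlled by $2$-adic closeness alone. One needs to rule out that small $2$-adic discrepancies get amplified through the irregular phase into a permanent divergence of the two trajectories. Resolving this seems to require either a genuine advance on the Collatz dynamics of numbers of the form $2^f 3^e + 1$, or a substantially longer explicit phase than what the regular-phase formula provides---which is presumably why Conjecture~\ref{conj_main} is stated as an open problem rather than a theorem.
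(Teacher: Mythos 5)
The statement you are addressing is Conjecture \ref{conj_main}: it concerns the classical Collatz map on the integers, and the paper does not prove it. It is quoted from \cite{MO} as an open problem, and what the paper actually establishes is the polynomial analogue, Theorem \ref{main2}. So there is no proof in the paper to compare yours against; the only question is whether your sketch settles the problem, and by your own admission it does not.

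Your regular-phase computation is correct ($\mathcal{C}^{3k}(N_n)=2^{an-2k}\cdot 3^{bn+k}+1$ as long as $an-2k\geq 2$), and it is the exact integer analogue of the first reduction in the paper's proof of Theorem \ref{main2}, namely Lemma \ref{lem:fab_reduce}, which reduces $t_{\min}(f_{a,b,n})$ to $t_{\min}\left((x+1)^{na+nb-1},S_3\right)$. The gap you name at the end is the genuine obstruction, and it is instructive to see exactly why it disappears in $\F_2[x]$ but not in $\Z$. After the regular phase you are left, essentially, with the orbit of a number of the form $2^e3^f+1$ of size comparable to $N_n$, about which nothing is known. In the polynomial case the corresponding tail is the $S_3$-orbit of a power of $(x+1)$, and this is completely controlled by the characteristic-$2$ identity $(x+1)^{2^s}=x^{2^s}+1$, which is what makes Lemma \ref{lem:S3time} ($t_{\min}((x+1)^N,S_3)=2^d-N$ for $2^{d-1}\leq N<2^d$) provable; $3^{2^s}$ admits no such collapse. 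Your $2$-adic coupling argument cannot substitute for this: agreement of $N_n$ and $N_{n+d}$ modulo $2^{an}$ forces their parity vectors to agree only through roughly the first $an$ division steps, whereas the full trajectory of $N_n$ is expected (heuristically and in all computed examples) to contain a strictly larger multiple of $an$ division steps, so the common prefix is exhausted well before either orbit reaches $1$, and a shared prefix alone implies nothing about the difference of the total stopping times. Controlling the orbits past that point is equivalent in difficulty to the unsolved Collatz dynamics of numbers $2^e3^f+1$, exactly as you say. The proposal is therefore a reasonable heuristic for why the common difference should be $a-b$, but it is not a proof, and Conjecture \ref{conj_main} remains open.
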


Here, the term 'contains arithmetic progressions of unbounded length' is given precisely by

\begin{defi}
Let $(a_{n})_{n \geq 0}$ and let $r \geq 0$. We say that $(a_{n})_{n \geq 0}$ has \emph{arithmetic sequences of unbounded length and with common difference $r$} if: for all $m \geq 1$, there exists $\ell \geq 0$, such that 
$$a_{\ell+1}=a_{\ell}+r, a_{\ell+2}=a_{\ell}+2r, \dots, a_{\ell+m-2}=a_{\ell}+(m-2)r, a_{\ell+m-1}=a_{\ell}+(m-1)r.$$
\end{defi}

While Conjecture \ref{conj_main} is concerned with a rather specific family of integers, there does not seem to be an easy way to analyse the stopping time of these integers in general and prove the conjecture in the arithmetic case. Here, we consider the analogous conjecture over $\F_2[x]$, and analyse the stopping time of polynomials of the form $\big(x^a(1+x)^b\big)^n+1$. We extract a formula for the stopping times of these polynomials, see Corollary \ref{coro:stoppingtime} below. As a consequence of this formula, we obtain the following polynomial analogue of Conjecture \ref{conj_main}:

\begin{theo}\label{main2} Let $a,b$ be non-negative integers, with $a > 0$ or $b > 0$. The sequence of stopping times of polynomials of the form $\big(x^a(1+x)^b\big)^n+1$ contains arithmetic sequences of unbounded length, and with common difference $a-b$. \end{theo}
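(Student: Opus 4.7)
The strategy is to extract Theorem \ref{main2} directly from the explicit stopping-time formula for $f_n = (x^a(1+x)^b)^n + 1$ established in Corollary \ref{coro:stoppingtime}. I will first isolate the ``easy'' linear behaviour of $s(f_n)$ from its combinatorial correction term by a degree-counting identity. Since $T$ raises the degree of an odd polynomial by $1$ and lowers that of an even polynomial by $1$, and since the orbit of $f_n$ runs from $\deg(f_n) = (a+b)n$ down to $0$, letting $m_n$ denote the number of odd polynomials encountered along this orbit gives
\[
s(f_n) = (a+b)n + 2m_n.
\]
Consequently, the identity $s(f_{n+1}) - s(f_n) = a-b$ is equivalent to $m_{n+1} - m_n = -b$, and the theorem reduces to the assertion that the integer sequence $(m_n)_{n \geq 1}$ admits runs of arbitrary length $L$ of consecutive indices on which each successive difference equals $-b$.

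The second step is to read off $m_n$ from Corollary \ref{coro:stoppingtime}. I expect that corollary to encode $s(f_n)$ --- and hence $m_n$ --- as a function that is piecewise linear in $n$, built from some combinatorial invariant of $n$ (most likely a count of odd polynomials arising in an auxiliary dynamical process attached to $n$). Given such a description, for each target length $L$ I would construct an index $n_0 = n_0(L)$ whose base-$2$ expansion ends in a block of zeros longer than $L$, or more generally whose auxiliary dynamical data remain stable under small perturbations for at least $L$ steps. On the window $n_0, n_0+1, \dots, n_0+L-1$, incrementing $n$ by one should then perturb the orbit of $f_n$ in exactly one controlled way, removing precisely $b$ odd polynomials per step and producing an arithmetic progression of length $L$ with common difference $a-b$ in the corresponding $s(f_n)$.

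\emph{Main obstacle.} The linear piece $(a+b)n$ of the identity above is transparent, but $m_n$ carries the full combinatorial intricacy of the Collatz dynamics, so the delicate point is to show that its successive differences lock into the constant value $-b$ on windows of unbounded length. This requires an inductive description of how the orbit of $f_{n+1}$ is built from that of $f_n$, together with an explicit construction of windows on which this perturbation remains regular. Corollary \ref{coro:stoppingtime} should supply precisely this control; the technical heart of the argument will consist of translating the formula there into the arithmetic-progression conclusion, and in particular of verifying that the candidate starting indices $n_0(L)$ really do produce the prescribed constant decrement in $m_n$ for all $L$ consecutive steps.
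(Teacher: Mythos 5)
Your opening reduction is sound: since $T$ raises the degree by $1$ at each odd step and lowers it by $1$ at each even step, and the orbit of $f_n=(x^a(1+x)^b)^n+1$ descends from degree $(a+b)n$ to degree $0$, the identity $s(f_n)=(a+b)n+2m_n$ (with $m_n$ the number of odd iterates) is correct, and it is exactly the paper's equation~(\ref{eq:t_min}), where $m_n$ appears as $t_{\min}(\widehat{f_n},S_3)$. But from that point on the proposal contains no proof: you invoke Corollary~\ref{coro:stoppingtime} as a black box whose content you only guess at (``I expect that corollary to encode\dots''), and the entire substance of the theorem lies precisely in establishing that formula. The paper's route is Lemma~\ref{lem:fab_reduce}, which reduces $t_{\min}(f_{a,b,n})$ to $t_{\min}((x+1)^{na+nb-1},S_3)$, and then Lemma~\ref{lem:S3time}, the technical heart: $t_{\min}((x+1)^N,S_3)=2^d-N$ for $2^{d-1}\le N<2^d$, proved by a delicate induction over the maximal blocks of consecutive ones in the binary expansion of $N$, tracking exactly when the leading term of $S_3^{k}((x+1)^N)$ cancels. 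Nothing in your proposal supplies this computation or a substitute for it; your own ``main obstacle'' paragraph concedes that the decisive step is missing.

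Moreover, your guess about how the conclusion is extracted is off target. You propose selecting special starting indices $n_0(L)$ whose base-$2$ expansion ends in a long block of zeros, so that the orbit perturbs ``regularly'' for $L$ steps. In fact no such selection is needed: the closed formula is $t_{\min}(f_{a,b,n})=2^{d+2}+(a-b)n$ for \emph{every} $n$ with $2^d<n(a+b)\le 2^{d+1}$, i.e.\ the stopping time is globally linear in $n$ with slope $a-b$ on each whole dyadic window, and the arithmetic progressions are simply these windows, whose lengths $\lfloor 2^{d+1}/(a+b)\rfloor-\lfloor 2^d/(a+b)\rfloor$ tend to infinity. The relevant binary structure is that of $n(a+b)-1$ (through Lemma~\ref{lem:S3time}), not of $n$, and it enters through the exact value $2^{d+1}-n(a+b)+1$ of the $S_3$-stopping time rather than through any stability-under-perturbation argument. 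As it stands the proposal is a plausible strategy outline with the correct first step, but the theorem is not proved.
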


Our first step in the proof of the above results is a reduction of the map $T$ on $\mathbb{F}_2[x]$ to an equivalent map which is simpler to handle. This is based on the following observations: 

Consider a polynomial $f \in \mathbb{F}_2[x]$, 
$$f=a_0+a_1x+\dotsc +a_nx^n \hspace{10pt} (a_n\neq 0)$$and let 
$$g=(1+x)f=a_0+(a_0+a_1)x+\dotsc+(a_{n-1}+a_n)x^n+a_nx^{n+1}.$$
Let us consider the polynomials $F$ and $G$ obtained by reversing the order of the coefficients of $f$ and $g$, respectively:
$$F=a_n+a_{n-1}x+\dotsc +a_0x^n,$$
$$G=a_n+(a_{n-1}+a_n)x+\dotsc+(a_0+a_1)x^n+a_0x^{n+1}.$$
Then $G=(1+x)F$. That is, the operation of multiplying by $1+x$ commutes with the operation of reversing the order of the coefficients. By this observation, given a polynomial $f$, instead of looking at its iterates under the mapping $T$, $$f_i=T^i(f)\hspace{10pt}(i=0,1,2,\dotsc)$$
one can consider the polynomials $F_i$ obtained from the $f_i$ by reversing the order of their coefficients. As we analyse in \S\ref{sec:stopping_time}, the polynomials $F_i$ undergo the following process: For each $i\geq 0$, either $F_{i+1}=F_i$ or $F_{i+1}=R(F_i)$, where $R(F)$ is the polynomial obtained from $(1+x)F$ by removing its leading term. Therefore, the stopping time analysis of the Collatz process is reduced to the analysis of the iterates of the map $R$. This map has the advantage that for each $F\in \mathbb{F}_2[x]$ and $i\geq 0$, $R^i(F)$ is equal to $(1+x)^i F$ with some of its leading terms removed (see Lemma \ref{lem:restriction} below). 

Taking advantage of this reduction, we then apply certain algebraic and combinatorical considerations in order to prove our main results. The proof of Theorem \ref{main} is given in \S\ref{sec:stopping_time}, and the proof of Theorem \ref{main2} is given in \S\ref{sec:arithmetic_progression}. Finally, in \S\ref{sec:open} we highlight several open questions for further study.

\section*{Acknowledgements}
The authors thank Alon Rosenperl for his experimental findings concerning Question \ref{average}. The second author is grateful for the support of a Technion fellowship, of an Open University of Israel post-doctoral fellowship, of the Israel Science Foundation (grant no. 353/21), and of TU Dresden during a research visit in December 2023 and January 2024.

\section{Bounding the stopping time of the polynomial Collatz map}\label{sec:stopping_time}
Let $T \colon \mathbb{F}_2[x] \rightarrow \mathbb{F}_2[x]$ denote the polynomial Collatz map defined in the introduction. Given a nonzero polynomial $f\in \mathbb{F}_2[x]$, we are interested in the stopping time of the iterative process defined by $T$ on $f$, i.e. $$t_{{\rm min}}(f)=\min\{k\geq 0 \mid T^{ k}(f)=1\}.$$ 
Consider the coefficient reversing map: 
\begin{align}
\hat{} \colon \mathbb{F}_2[x]&\rightarrow \mathbb{F}_2[x] \label{eq:rever}\\
 f& \mapsto \hat{f}=x^{\deg(f)}f(1/x). \nonumber
\end{align}
(we use the convention that $\deg (0)=-\infty$, so that $\hat{0}=0$ holds). This map has the following properties, all of which are straightforward to verify:
\begin{enumerate}
\item If $f(x)=\sum_{i=0}^{m}a_{i}x^{i} \in \mathbb{F}_2[x]$ with $a_{m}\neq 0$, then $\hat{f}(x)=\sum_{i=0}^{m}a_{i}x^{m-i}$.
\item For any $f,g\in \mathbb{F}_2[x]$, we have $\widehat{fg}=\hat{f}\hat{g}$.
\item For any $f \in \mathbb{F}_2[x]$ and any integer $k \geq 0$, we have $\widehat{x^{k}f}=\hat{f}$.
\item For any $f\in \mathbb{F}_2[x]\setminus \{0\}$, we have $\hat{\hat{f}}=f/x^{r}$, where $r$ is the maximal power of $x$ dividing $f$. In particular, if $f$ is odd then $\hat{\hat{f}}=f$.
\end{enumerate}
Given $f\in \mathbb{F}_2[x]\backslash\{0\}$, in order to estimate $t_{{\rm min}}(f)$,
we consider the sequence of iterates $f,T(f),T^{ 2}(f),\ldots$ and their
transforms $\widehat{f},\widehat{T(f)},\widehat{T^{ 2}f},\ldots$ Note
that whenever $T^{ n}(f)=\left(T^{n-1}(f)\right)/x$, we have $\widehat{T^{ n}(f)}=\widehat{T^{n-1}(f)}$,
i.e. the sequence of transforms ``stutters''. 

Let us define the following auxiliary maps on $\mathbb{F}_2[x]$.

\begin{defi}
For $f \in \mathbb{F}_2[x]$, let:
\begin{itemize}
\item $T_{1}(f)=(1+x)f+1$;
\item $T_{2}(f)=f/x^{r}$, where $r$ is the maximal power of $x$ dividing
$f$ (and $T_2(0)=0)$;
\item $T_{3}(f)=(T_{2}\circ T_{1})(f)$.
\end{itemize}
\end{defi}
Clearly, the iterates $\left (T_{3}^{ n}(f) \right)_{n \geq 0}$ form a subsequence of the iterates $\left (T^{ n}(f) \right )_{n\geq 0}$.

For any map $u \colon \mathbb{F}_2[x]\rightarrow \mathbb{F}_2[x]$ and $f\in \mathbb{F}_2[x]$, let us define $$t_{\min}(f,u)=\min\{k\geq 0 \mid u^{ r}(f)=1\}$$
(if no such $k$ exists then $t_{{\rm min}}(f,u)=\infty).$ In particular, when $u=T$, we retrieve $t_{\min}$ defined at the beginning of the section.
\begin{lem}
Suppose that $f$ is odd. Then $t_{{\rm min}}(f)=2t_{{\rm min}}(f,T_{3})+\deg(f)$.
\end{lem}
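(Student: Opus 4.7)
The plan is to decompose the $T$-orbit of $f$ into ``blocks,'' each corresponding to a single application of $T_3$, and then to account for the total length via a degree telescoping. Since $f$ is odd, $T(f)=T_1(f)$; and for any odd polynomial $g$, the value $T_1(g)=(1+x)g+1$ satisfies $T_1(g)(0)=g(0)+1=0$, so $T_1(g)$ is even. Letting $r_g$ denote the largest $r$ with $x^r \mid T_1(g)$, the next $r_g$ iterations of $T$ repeatedly divide by $x$, arriving at $T_2(T_1(g)) = T_3(g)$, which is again odd (and nonzero, since $T_1(g)=0$ would force $(1+x)g=1$, impossible). Thus passing from any odd $g$ to $T_3(g)$ under $T$ consumes exactly $1+r_g$ iterations.

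Write $N := t_{\min}(f,T_3)$ and, for $0\leq i < N$, set $r_i := r_{T_3^i(f)}$. Each $T_3^i(f)$ for $i<N$ is odd and different from $1$, so the block decomposition above applies at every stage, and summing the block lengths gives
$$t_{\min}(f) \;=\; \sum_{i=0}^{N-1}(1+r_i) \;=\; N + \sum_{i=0}^{N-1} r_i.$$

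To evaluate the sum $\sum r_i$, I would use a degree telescoping. For an odd polynomial $g$ of degree $n\geq 0$, the product $(1+x)g$ has leading term $x^{n+1}$ (all leading coefficients in $\F_2[x]$ equal $1$), and adding the constant $1$ does not affect that leading term, so $\deg T_1(g) = n+1$ and hence $\deg T_3(g) = n+1-r_g$. Telescoping over $i=0,1,\dots,N-1$ and using $\deg T_3^N(f) = \deg(1) = 0$ gives
$$-\deg(f) \;=\; \sum_{i=0}^{N-1}\bigl(\deg T_3^{i+1}(f) - \deg T_3^i(f)\bigr) \;=\; N - \sum_{i=0}^{N-1} r_i,$$
so $\sum_{i=0}^{N-1} r_i = N + \deg(f)$. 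Substituting back yields $t_{\min}(f) = 2N + \deg(f)$, as claimed.

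There is no serious obstacle here: the identity is purely a bookkeeping/telescoping argument along the subsequence of odd polynomials in the $T$-orbit. The only delicate points are the two ``degeneracies'' to rule out, both trivial: that $T_3^i(f)$ remains nonzero and odd at each stage $i<N$ (immediate from the form of $T_1$ and $T_2$), and that the leading-term computation for $T_1(g)$ is valid, which uses characteristic $2$ and the fact that $g$ odd forces $\deg(g)\geq 0$.
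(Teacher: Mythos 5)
Your proof is correct and is essentially the paper's argument: both decompose the $T$-orbit into blocks running from one odd iterate to the next (each block of length $1+r$) and exploit the degree relation $\deg T_3(g)=\deg(g)+1-r$; the paper packages the bookkeeping as an induction on $t_{\min}(f)$, whereas you unroll it into a single telescoping sum. The one point to make explicit is that $N=t_{\min}(f,T_3)$ is finite before summing to $N$ --- this follows from the finiteness of $t_{\min}(f)$ (cited from Hicks et al.\ at the start of the paper's proof) combined with your own observation that all intermediate iterates inside a block are even and hence never equal to $1$.
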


\begin{proof}
Since $t_{{\rm min}}(f)$ is finite (by \cite{Hicks}), let us prove the claim by induction on it. The basis of the induction is the case $f=1$ for which the claim clearly holds. In the inductive step, let $T_{1}(f)=x^{r}g$
with $x\nmid g$. Then $T_{3}(f)=T^{r+1}(f)=g$, and by the induction
hypothesis, 
\begin{align*}
t_{\min}(f) & =r+1+t_{{\rm min}}(g)=r+1+2t_{{\rm min}}(g,T_{3})+\deg (g)\\
 & =r+1+2(t_{{\rm min}}(f,T_{3})-1)+(\deg(T_{1}(f))-r)\\
 & =r+1+2t_{{\rm min}}(f,T_{3})-2+(\deg(f)+1-r)\\
 & =2t_{{\rm min}}(f,T_{3})+\deg (f). \qedhere
\end{align*}
\end{proof}
Let us now define the following maps from on $\mathbb{F}_2[x]$.

\begin{defi}\label{defi:s1s2}
For $f \in \mathbb{F}_2[x]$, we let:
\begin{itemize}
\item $S_{1}(f)=(x+1)f$;
\item $S_{2}(f)=f+x^{\deg(f)}$ (Since we are over $\mathbb{F}_{2}$, this
map nullifies the leading term of $f$);
\item $S_{3}(f)=(S_{2}\circ S_{1})(f)$.
\end{itemize}
\end{defi}
%\pagebreak
\begin{lem}
Suppose that $f$ is odd. Then we have:
\begin{enumerate}
\item $\widehat{T_{3}(f)}=S_{3}\left(\hat{f}\right)$,
\item $t_{{\rm min}}(f,T_{3})=t_{{\rm min}}(\hat{f},S_{3})$.
\end{enumerate}
\end{lem}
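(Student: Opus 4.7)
The plan is to prove (1) by a direct coefficient-by-coefficient computation, and then derive (2) by a straightforward induction on $t_{\min}(f,T_3)$ using (1) together with the observation that $T_3$ preserves oddness of nonzero polynomials.

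For part (1), write $f = \sum_{i=0}^n a_i x^i$ with $a_0 = a_n = 1$ (using that $f$ is odd and nonzero, so over $\mathbb{F}_2$ both the constant and leading coefficients equal $1$). First I would verify that multiplication by $1+x$ commutes with coefficient reversal on such $f$: both $(1+x)\hat f$ and $\widehat{(1+x)f}$ equal $a_n + (a_{n-1}+a_n)x + \dots + (a_0+a_1)x^n + a_0 x^{n+1}$, as is clear from expanding both sides. This uses $a_0 \neq 0$, so that $\deg \hat f = \deg f$ and no spurious degree drop occurs. Now $T_1(f) = (1+x)f + 1$ differs from $(1+x)f$ only in the constant term, which becomes $0$. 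Under the reversal map, this modification in the constant coefficient corresponds precisely to killing the coefficient of $x^{n+1}$ in $\widehat{(1+x)f} = (1+x)\hat f$, which is exactly the effect of $S_2$ applied to $S_1(\hat f) = (1+x)\hat f$ (a polynomial of degree $n+1$ since its leading coefficient is $a_0 = 1$). Hence $\widehat{T_1(f)} = S_2(S_1(\hat f)) = S_3(\hat f)$. Finally, since $T_3(f) = T_1(f)/x^r$ for some $r \geq 0$, property (3) of the reversal gives $\widehat{T_3(f)} = \widehat{T_1(f)} = S_3(\hat f)$, establishing (1).

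For part (2), I would proceed by induction on $k = t_{\min}(f, T_3)$, which is finite by \cite{Hicks}. The base case $k=0$ forces $f=1$, whence $\hat f = 1$ and $t_{\min}(\hat f, S_3) = 0$. For the inductive step with $k \geq 1$, observe that $T_3(f)$ is odd and nonzero: by construction $T_2$ strips off the maximal power of $x$, so the output is not divisible by $x$, and it cannot be $0$ since $T_1(f) \neq 0$. Hence the inductive hypothesis applies to $T_3(f)$, giving $t_{\min}(T_3(f), T_3) = t_{\min}(\widehat{T_3(f)}, S_3)$. Using part (1) to rewrite $\widehat{T_3(f)} = S_3(\hat f)$, this becomes $k-1 = t_{\min}(S_3(\hat f), S_3)$. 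To conclude $t_{\min}(\hat f, S_3) = k$, it suffices to know $\hat f \neq 1$, which follows from $f \neq 1$ together with the oddness of $f$ (since $\hat f = 1$ together with $f$ odd would force $f = 1$).

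The only nontrivial step is the coefficient bookkeeping in part (1), particularly the check that the addition of $1$ on the $T$-side matches the removal of the leading term on the $S$-side; once this is verified for odd $f$, part (2) follows from a clean induction with no real obstacle beyond tracking that oddness is preserved throughout the orbit under $T_3$.
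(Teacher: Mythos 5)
Your proposal is correct and follows essentially the same route as the paper: part (1) is the identity $\widehat{(1+x)f+1}=S_2\bigl((1+x)\hat f\bigr)$ combined with the fact that the reversal ignores factors of $x$, which you verify by explicit coefficient bookkeeping where the paper invokes the listed properties of the $\widehat{\ }$ transform; part (2) rests on the same observation that $T_3$ preserves oddness and that $\hat f=1\Leftrightarrow f=1$ for odd $f$, merely packaged as an induction rather than stated directly. No gaps.
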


\begin{proof}
~
\begin{enumerate}
\item We first note that for any odd $g$, we have $\widehat{g+1}=S_{2}(\widehat{g}).$
Using this and the properties of the $\widehat{\,\,\,}$~transform,
we have 
\begin{align*}
\widehat{T_{3}(f)} & =\widehat{T_{2}(T_{1}(f))}=\widehat{T_{1}(f)}=\widehat{(1+x)f+1}=S_{2}\left(\widehat{(1+x)f}\right)\\
 & =S_{2}\left(\widehat{x+1}\cdot\widehat{f}\right)=S_{2}\left((1+x)\widehat{f}\right)=S_{2}\left(S_{1}\left(\hat{f}\right)\right)=S_{3}\left(\hat{f}\right).
\end{align*}
\item This follows from (1), since iterates of $T_{3}$ produce odd elements,
and for such elements we have $\hat{f}=1\Leftrightarrow f=1$. \qedhere
\end{enumerate}
\end{proof}
The last lemma shows that we can replace the Collatz process
in $\mathbb{F}_2[x]$ with the simpler process of multiplying by $(x+1)$ and removing
the leading term. 

We also note that by the last two lemmas, for any odd $f$, we have
\begin{equation} \label{eq:t_min}
t_{{\rm min}}(f)=2t_{{\rm min}}(\hat{f},S_{3})+\deg(f). 
\end{equation}

Let us introduce the following notation: 

For a polynomial $f(x)=\sum a_{i}x^{i} \in \mathbb{F}_2[x]$, we let $f|_{\leq n}=\sum_{i\leq n}a_{i}x^{i}$.

\begin{lem}
\label{lem:iterations_formula}Let $f(x)=x^{n}+g$, where $\deg(g)< n$. Then for any $0\leq i \leq n-\deg(g)$, we have
$ S_3^{ i} (f)= (x+1)^ig+x^n=((x+1)^if)|_{\leq n}$.
\end{lem}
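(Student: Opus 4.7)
The plan is to prove the formula by induction on $i$. The key observation is that each application of $S_3$ consists of multiplying by $(x+1)$ and then removing the resulting leading term; so as long as we maintain the form $(x+1)^i g + x^n$ with $\deg((x+1)^i g) < n$, one step of $S_3$ produces $(x+1)^{i+1} g + x^{n+1} + x^n$, and the leading $x^{n+1}$ is precisely what $S_2$ deletes, leaving $(x+1)^{i+1} g + x^n$.

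The base case $i=0$ is immediate: $(x+1)^0 g + x^n = g + x^n = f$. For the inductive step from $i$ to $i+1$, valid when $i < n - \deg(g)$, the hypothesis $i + \deg(g) \leq n - 1$ guarantees that $S_3^i(f) = (x+1)^i g + x^n$ has degree exactly $n$ with leading term $x^n$; the computation sketched above then gives $S_3^{i+1}(f) = (x+1)^{i+1} g + x^n$. This carries the induction up to and including $i = n - \deg(g)$. At the boundary, the leading coefficient of $(x+1)^{n-\deg(g)} g$ may coincide with the coefficient of $x^n$ and cancel it, so the polynomial can drop in degree, but the claimed equality still holds.

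For the second equality, expand $(x+1)^i f = (x+1)^i x^n + (x+1)^i g$. The first summand equals $\sum_{k=0}^{i} \binom{i}{k} x^{n+i-k}$, whose only contribution of degree $\leq n$ is $x^n$ (the $k=i$ term). The second summand has degree $i + \deg(g) \leq n$ in the allowed range, so it contributes in full. Hence $((x+1)^i f)|_{\leq n} = x^n + (x+1)^i g$, matching the left-hand side.

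I do not anticipate any real obstacle: the proof is a short bookkeeping induction, and the only point requiring care is tracking the degree bound $i \leq n - \deg(g)$, which is exactly what ensures that the ``low'' part $(x+1)^i g$ stays strictly below degree $n$ while the ``high'' part $x^n$ is stably reproduced by the cancellation $(x+1)\cdot x^n = x^{n+1} + x^n$ followed by $S_2$.
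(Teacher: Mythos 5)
Your proof is correct and follows essentially the same route as the paper's: induction on $i$ using the cancellation $(x+1)x^n + x^{n+1} = x^n$ for the first equality, and the observation that $(x+1)^i x^n$ contributes only $x^n$ below degree $n+1$ for the second. Your explicit remark about the possible degree drop at the boundary $i = n-\deg(g)$ is a nice clarification but does not change the argument.
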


\begin{proof}
Let $r=n-\deg(g)$. Let us prove the first equality $ S_3^{ i} (f)= (x+1)^ig+x^n$, by induction on $i$. For $i=0$ it is clear. Let $i<n - \deg(g)$, and assume that $ S_3^{ i} (f)= (x+1)^ig+x^n$. We note that $\deg ( (x+1)^i g)<n$, hence $\deg (S_3^{ i}(f))=n$. Applying $S_1$, we get
\[
S_{1}(S_{3}^{ i}(f))=(x+1)S_3^{ i}(f),
\]
which is a polynomial of degree $n+1$, so
\begin{align*}
S_3^{i+1}(f) & =S_2 (S_1(S_{3}^{ i}(f)))\\
 & =S_1(S_{3}^{ i}(f))+x^{n+1}\\
 & = (x+1) ((x+1)^ig + x^n) + x^{n+1}\\
 & = (x+1)^{i+1}g +x^n. 
\end{align*}
This completes the induction. 
Finally, for any $0\leq i \leq n-\deg(g)$, we have $\deg((x+1)^ig) \leq n$ and $x^n\equiv x^n (x+1)^i \mod x^n$, hence 
\begin{align*} 
S_3^{ i}(f)&= (x+1)^ig +x^n = ((x+1)^ig + x^n)|_{\leq n} \\
&=((x+1)^ig + x^n(x+1)^i)|_{\leq n}\\
&=((x+1)^i (g+x^n))|_{\leq n} = ((x+1)^if)|_{\leq n}. \qedhere
\end{align*}
\end{proof}

\begin{lem}
\label{lem:n-r}Let $f(x)=x^{n}+g$, where $g$ is odd, $\deg(g)=n-r$ and $r>0$. Then we have $t_{{\rm min}}(f,S_{3})=r-1+t_{{\rm min}}\left((x+1)^{r-1}g,S_{3}\right) $. 
\end{lem}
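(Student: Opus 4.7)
The plan is to use Lemma \ref{lem:iterations_formula} to compute the first $r$ iterates of $S_3$ starting from $f$, then to observe that $S_3^r(f)$ already agrees with $S_3$ applied once to $(x+1)^{r-1}g$. This identifies the trajectory of $f$ past step $r$ with the trajectory of $(x+1)^{r-1}g$ past step $1$, which directly yields the claimed formula.

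Concretely, since $\deg(g)=n-r$, Lemma \ref{lem:iterations_formula} applies in the range $0 \le i \le r$ and gives $S_3^i(f) = (x+1)^i g + x^n$. For $0 \le i \le r-1$ the summand $(x+1)^i g$ has degree at most $n-1$, so $S_3^i(f)$ has degree exactly $n \ge 1$ and cannot equal $1$; hence $t_{\min}(f, S_3) \ge r$, and at step $r$ we get $S_3^r(f) = (x+1)^r g + x^n$, a polynomial of degree strictly less than $n$ since the leading monomial $x^n$ of $(x+1)^r g$ cancels with $x^n$. On the other hand, applying $S_3$ directly to $h := (x+1)^{r-1}g$ (which has degree $n-1$): $S_1$ multiplies by $x+1$ to produce $(x+1)^r g$ of degree $n$, and then $S_2$ strips its leading term $x^n$, so $S_3(h) = (x+1)^r g + x^n = S_3^r(f)$.

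From this identification, the orbits of $f$ and $h$ merge, giving $S_3^{k}(h) = S_3^{r+k-1}(f)$ for every $k \ge 1$. Since $f$ does not reach $1$ before step $r$, we conclude
$$t_{\min}(f, S_3) = r + t_{\min}\bigl(S_3^r(f), S_3\bigr) = r + \bigl(t_{\min}(h, S_3) - 1\bigr) = (r-1) + t_{\min}\bigl((x+1)^{r-1} g, S_3\bigr),$$
which is the asserted identity. The whole argument is a short unfolding of the definitions of $S_1$ and $S_2$ together with the explicit iteration formula of the previous lemma, so no serious obstacle is anticipated; the only content is the identification $S_3^r(f) = S_3(h)$, which is the key computational step.
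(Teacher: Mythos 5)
Your proposal is correct and follows essentially the same route as the paper: apply Lemma \ref{lem:iterations_formula} to get $S_3^i(f)=(x+1)^ig+x^n$ for $i\leq r$, note that these iterates have degree $n$ (hence are not $1$) for $i<r$, observe the cancellation of $x^n$ at step $r$ so that $S_3^r(f)=S_2((x+1)^rg)=S_3((x+1)^{r-1}g)$, and then count. The only (shared) caveat is the degenerate case $(x+1)^{r-1}g=1$, where the final step $t_{\min}(S_3(h),S_3)=t_{\min}(h,S_3)-1$ breaks down; this edge case is present in the paper's own argument as well.
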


\begin{proof}
By Lemma \ref{lem:iterations_formula},

\[
S_{3}^{ r}(f)=x^{n}+(x+1)^{r}g.
\]
We note that $x^{n}$ is cancelled in the last expression, so 
\begin{align*}
S_{3}^{ r}(f) & =S_{2}\left((x+1)^{r}g\right)=S_{2}\left(S_{1}\left((x+1)^{r-1}g\right)\right)=S_{3}\left((x+1)^{r-1}g\right).
\end{align*}
Again by Lemma \ref{lem:iterations_formula}, we have $S_{3}^{ i}(f)\neq 1$, for all $1\leq i\leq r-1$. Hence we get
\begin{equation*}
t_{{\rm min}}(f,S_3)=r+t_{{\rm min}}(S_3((x+1)^{r-1}g),S_3)=r-1+t_{{\rm min}}\left((x+1)^{r-1}g,S_{3}\right). \qedhere
\end{equation*}
\end{proof}

\begin{lem}\label{lem:restriction}
Let $f,g$ be odd polynomials.
\begin{enumerate}
\item We have $\deg(S_{3}(f))\leq\deg(f)$.
\item For all $k\geq0$, we have $S_{3}^{ k}(f)=\left((x+1)^{k}f\right)|_{\leq m}$
where $m=\deg(S_{3}^{ k}(f))$.
\item If $g=f|_{\leq n}$ for some $n$, then $t_{\min}(g,S_{3})\leq t_{{\rm min}}(f,S_{3})$.
\end{enumerate}
\end{lem}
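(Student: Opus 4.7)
For part (1), I would observe that by definition $S_3(f) = S_2\bigl((x+1)f\bigr)$, and since $(x+1)f$ has degree $\deg(f)+1$ with leading coefficient $1$, the map $S_2$ strips off this top monomial exactly, leaving $\deg(S_3(f)) \leq \deg(f)$. This also yields the useful identity $S_3(h) = (x+1)h + x^{\deg(h)+1}$ over $\F_2$, valid for any nonzero $h$.

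For part (2), I would induct on $k$, tracking the difference $p_k := (x+1)^k f - S_3^k(f)$ and setting $m_k := \deg(S_3^k(f))$. The inductive hypothesis is that every monomial of $p_k$ has degree $> m_k$, which is clear at $k=0$ with $p_0 = 0$. For the step, the identity from part (1) gives $p_{k+1} = (x+1)p_k + x^{m_k+1}$, and both summands have monomials of degree $\geq m_k+1$; since part (1) yields $m_{k+1} \leq m_k$, these degrees exceed $m_{k+1}$, completing the induction. Truncating $(x+1)^k f = S_3^k(f) + p_k$ to degrees $\leq m_k$ kills $p_k$ and recovers $S_3^k(f)$ as claimed.

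For part (3), the right framework is a \emph{truncation order} $g \preceq f$, defined by requiring every monomial of $f-g$ to have degree $> \deg(g)$ (equivalently, $g = f|_{\leq \deg(g)}$). The hypothesis $g = f|_{\leq n}$ with $\deg(g) \leq n$ gives $g \preceq f$. The key closure step is showing $S_3$ preserves $\preceq$: if $g \preceq f$ then
\[
S_3(f) - S_3(g) = (x+1)(f - g) + x^{\deg(f)+1} + x^{\deg(g)+1},
\]
and each of the three summands has all monomials of degree $\geq \deg(g)+1 > \deg(S_3(g))$ by part (1). Iterating, $S_3^k(g) \preceq S_3^k(f)$ for all $k$. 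Taking $k = t_{\min}(f, S_3)$ gives $S_3^k(g) \preceq 1$, which forces $\deg(S_3^k(g)) = 0$: otherwise the leading monomial of $S_3^k(g)$ would contribute a nonzero term to $1 - S_3^k(g)$ at degree exactly $\deg(S_3^k(g))$, contradicting the strict inequality in the definition of $\preceq$. Since $S_3$-iterates of odd polynomials remain odd, having degree $0$ and constant term $1$ forces $S_3^k(g) = 1$, so $t_{\min}(g, S_3) \leq k$.

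The main obstacle is locating the correct truncation relation in part (3): a fixed-cutoff notion such as ``$S_3^k(g) = S_3^k(f)|_{\leq n}$'' does not propagate under iteration, since the degrees of the iterates can shrink independently. Allowing the cutoff to adapt to $\deg(S_3^k(g))$ yields a relation that is preserved by $S_3$, reducing the verification to the one-line algebraic identity displayed above.
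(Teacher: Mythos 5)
Your proposal is correct and follows essentially the same route as the paper: part (1) is the same degree count, part (2) is the same induction (your difference polynomial $p_k$ with all monomials of degree $>m_k$ is exactly the paper's congruence $S_3^k(f)\equiv (x+1)^k f \bmod x^{m_k+1}$), and your truncation order $\preceq$ in part (3) is precisely the paper's inductive invariant $\deg(g_k)\leq\deg(f_k)$ together with $f_k\equiv g_k \bmod x^{1+\deg(g_k)}$. The only difference is presentational, and your final step deducing $S_3^k(g)=1$ from $S_3^k(g)\preceq 1$ is spelled out slightly more explicitly than in the paper.
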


\begin{proof}
~
\begin{enumerate}
\item This is clear, since $\deg(S_{3}(f))=\deg( S_{2}(S_{1}(f)))<\deg(S_{1}(f))=\deg (f)+1$.
\item The claim is clear for $k=0$. If it is true for $k$, then let $h_{1}=S_{3}^{ k}(f)$ and
$h_{2}=S_{3}^{k+1}(f)=S_{3}(h_{1})$. Let $d_{1}=\deg(h_{1})$ and $d_{2}=\deg(h_{2})$. Then, by (1), we have $d_{2}\leq d_{1}$. Since $h_{1}=\left((x+1)^{k}f\right)|_{\leq d_{1}}$ (by Lemma \ref{lem:iterations_formula}),
we have $h_{1}\equiv(x+1)^{k}f\mod x^{d_{1}+1}$, so $(x+1)h_{1}\equiv(x+1)^{k+1}f\mod x^{d_{1}+1}$,
hence $\left((x+1)^{k+1}f\right)|_{\leq d_{1}}=\left((x+1)h_{1}\right)|_{\leq d_{1}}$
. We conclude that $(x+1)^{k+1}f|_{\leq d_{2}}=\left((x+1)h_{1}\right)|_{\leq d_{2}}=S_{3}(h_{1})=h_{2}$.
\item Let $f_{k}=S_{3}^{ k}(f)$ and $g_{k}=S_{3}^{ k}(g)$ for
$k=0,1,2,\ldots$ We will see by induction that $\deg( g_{k})\leq\deg(f_{k})$
and $f_{k}\equiv g_{k}\mod x^{1+\deg(g_{k})}$ for all $k$. Indeed,
for $k=0$ it is true by our assumptions. If it is true for $k$,
then $(x+1)f_{k}\equiv(x+1)g_{k}\mod x^{1+\deg( g_{k})}$, hence
\begin{align*}
f_{k+1} & =(x+1)f_{k}+x^{1+\deg( f_{k})}\equiv(x+1)g_{k}\\
 & \equiv(x+1)g_{k}+x^{1+\deg (g_{k})}=g_{k+1}\mod x^{1+\deg(g_{k})},
\end{align*}
and since $\deg(g_{k})\geq\deg(g_{k+1})$, we have
\[
f_{k+1}\equiv g_{k+1}\mod x^{1+\deg (g_{k+1})},
\]
from which we also get that $\deg(g_{k+1})\leq\deg( f_{k+1})$. Our induction
is complete, and the claim follows. \qedhere
\end{enumerate}
\end{proof}
\begin{lem}
Let $f=(x+1)^{r}g$ where $g$ is odd. Then we have
\[
t_{{\rm min}}(f,S_{3})\leq2\deg(f)-r+t_{\min}(g,S_{3}).
\]
\label{lem:estimate}
\end{lem}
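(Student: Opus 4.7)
My plan is to locate a specific iteration count $k\leq 2\deg(f)-r$ after which $S_3^k(f)$ is a truncation of $g$, and then invoke Lemma \ref{lem:restriction}(3) to translate the bound to $t_{\min}(g,S_3)$. The case $r=0$ is trivial (here $f=g$), so I assume throughout that $r\geq 1$, which forces $\deg(f)\geq 1$.

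The central device is the Frobenius identity $(x+1)^{2^K}=x^{2^K}+1$ in $\F_2[x]$. I take $K$ to be the smallest positive integer with $2^K>\deg(f)$; minimality then gives $2^K\leq 2\deg(f)$, and I set $k:=2^K-r$. Since $2^K>\deg(f)\geq r$, we have $1\leq k\leq 2\deg(f)-r$. The polynomial $f=(x+1)^r g$ is odd, so Lemma \ref{lem:restriction}(2) yields
\[
S_3^k(f)=\bigl((x+1)^{k+r}g\bigr)\big|_{\leq m}=\bigl((x^{2^K}+1)g\bigr)\big|_{\leq m},
\]
where $m:=\deg(S_3^k(f))\leq \deg(f)<2^K$. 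Because $x^{2^K}g$ has no terms of degree $\leq m$, its contribution vanishes after truncation, so $S_3^k(f)=g|_{\leq m}$.

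Finally, $g|_{\leq m}$ is odd (its constant term equals $g(0)=1$), so Lemma \ref{lem:restriction}(3) gives $t_{\min}(g|_{\leq m},S_3)\leq t_{\min}(g,S_3)$, and therefore
\[
t_{\min}(f,S_3)\leq k+t_{\min}(S_3^k(f),S_3)\leq (2\deg(f)-r)+t_{\min}(g,S_3),
\]
as desired. The step I expect to be the delicate one is the selection of $k$: one needs $k+r$ to be a power of $2$ large enough to trigger the Frobenius cancellation on low-degree coefficients, while simultaneously keeping $k$ itself at most $2\deg(f)-r$. Taking $2^K$ to be the smallest power of $2$ strictly exceeding $\deg(f)$ achieves both conditions, and it is also what forces the factor of $2$ in the bound.
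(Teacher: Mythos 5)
Your proof is correct and follows essentially the same route as the paper's: both choose $k=2^K-r$ with $2^K$ the smallest power of two exceeding $\deg(f)$ (so $2^K\leq 2\deg(f)$), apply Lemma \ref{lem:restriction}(2) together with the Frobenius identity $(x+1)^{2^K}=x^{2^K}+1$ to conclude $S_3^k(f)=g|_{\leq m}$, and finish with Lemma \ref{lem:restriction}(3). The only cosmetic difference is that you treat $r=0$ separately and explicitly check $k\geq 1$ and the oddness of $g|_{\leq m}$, which the paper leaves implicit.
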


\begin{proof}
Let $s$ be an integer such that $2^{s-1}\leq\deg(f)<2^{s}$. Let $k=2^{s}-r$
and let $d=\deg\left(S_{3}^{ k}(f)\right)$. By Lemma \ref{lem:restriction},
$d\leq\deg(f)<2^{s}$ and

\begin{align*}
S_{3}^{ k}(f) & =(x+1)^{k}f|_{\leq d}=(x+1)^{2^{s}}g|_{\leq d}\\
 & =\left(x^{2^{s}}+1\right)g|_{\leq d}=x^{2^{s}}g+g|_{\leq d}\\
 & =g|_{\leq d}.
\end{align*}
Again by Lemma \ref{lem:restriction}, 
\begin{align*}
t_{{\rm min}}(f,S_{3}) & \leq k+t_{{\rm min}}(g|_{\leq d},S_{3})\leq k+t_{{\rm min}}(g,S_{3})\\
 & =2^{s}-r+t_{{\rm min}}(g,S_{3})\leq2\deg(f)-r+t_{{\rm min}}(g,S_{3}). \qedhere
\end{align*}
\end{proof}
\begin{theo}
For any odd $f\in \mathbb{F}_2[x]$ we have $t_{\min}(f,S_{3})\leq\sqrt{2}\left(\deg (f)\right)^{1.5}$.
\end{theo}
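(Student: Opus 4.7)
The plan is to proceed by induction on $n = \deg(f)$, with trivial base case $f = 1$. In the inductive step, the first move is to apply Lemma \ref{lem:estimate} with $r = \nu(f)$, the $(x+1)$-adic valuation of $f$. This gives $t_{\min}(f, S_3) \leq 2n - r + t_{\min}(g, S_3)$ where $g = f/(x+1)^r$ is odd of degree $n - r$. By the induction hypothesis, $t_{\min}(g, S_3) \leq \sqrt{2}(n-r)^{3/2}$; using convexity of $x \mapsto \sqrt{2}\,x^{3/2}$, the resulting bound is at most $\sqrt{2}\,n^{3/2}$ provided $r \geq \sqrt{2n}$ (the key inequality is $\sqrt{2}\bigl(n^{3/2} - (n-r)^{3/2}\bigr) \geq 2n - r$ for such $r$). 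This handles the case of large $(x+1)$-adic valuation.

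When $\nu(f)$ is small, Lemma \ref{lem:estimate} alone is insufficient, and I would first boost the valuation via Lemma \ref{lem:n-r}. Writing $f = x^n + g$ with $g$ odd of degree $n - r'$, after $r' - 1$ iterations of $S_3$ we arrive at the polynomial $(x+1)^{r'-1} g$, which has degree $n - 1$ and $(x+1)$-adic valuation at least $r' - 1 + \nu(g)$. Applying Lemma \ref{lem:estimate} to this polynomial and then invoking the induction hypothesis yields a combined recursive bound, effective when $r' + \nu(g)$ is of order $\sqrt{n}$ or larger.

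The hardest part is the worst case in which both $\nu(f)$ and the gap $r'$ remain small across many consecutive reductions, so that each combined step reduces the degree by only $1$ at a cost close to $2(n-1)$. Handling this requires an amortized argument: either strengthening the inductive invariant with an auxiliary parameter (perhaps tracking the total accumulated gap structure of the iterates), or chaining several applications of Lemmas \ref{lem:n-r} and \ref{lem:estimate} together before invoking the induction hypothesis. The constant $\sqrt{2}$ in the target bound mirrors the optimal balance between the per-phase cost ($\leq 2n$) and the per-phase degree reduction ($\approx \sqrt{2n}$), so that driving the degree from $n$ down to $0$ takes at most $\sqrt{2}\,n^{3/2}$ iterations in total across approximately $\sqrt{n/2}$ phases.
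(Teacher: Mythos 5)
There is a genuine gap here. You have assembled the right ingredients --- induction on $n=\deg(f)$, Lemma \ref{lem:n-r}, Lemma \ref{lem:estimate}, and the threshold $\sqrt{2n}$ --- but you leave the decisive case unresolved and speculate about an amortized argument or a strengthened invariant that is not actually needed. The missing idea is that the case distinction should be made on the gap $r'$ alone, where $f=x^{n}+g$ with $g$ odd and $\deg(g)=n-r'$, and \emph{not} on $\nu(f)$ or on $r'+\nu(g)$. If $r'\leq\sqrt{2n}$, you should not follow Lemma \ref{lem:n-r} with Lemma \ref{lem:estimate} at all: the single application of Lemma \ref{lem:n-r} costs only $r'-1<\sqrt{2n}$ iterations (not ``close to $2(n-1)$'') and lands on $(x+1)^{r'-1}g$, which has degree exactly $n-1$, so the induction hypothesis already gives $t_{\min}(f,S_3)\leq r'-1+\sqrt{2}\,(n-1)^{1.5}<\sqrt{2n}+(n-1)\sqrt{2n}=\sqrt{2}\,n^{1.5}$. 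A cost of $\sqrt{2n}$ per unit of degree drop is exactly what the bound $\sqrt{2}\,n^{1.5}$ absorbs, so the ``worst case'' you fear --- the degree decreasing by one at a time over many consecutive steps --- is in fact the cheap case, not the hard one.

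If instead $r'>\sqrt{2n}$, then $(x+1)^{r'-1}g$ automatically has $(x+1)$-adic valuation at least $r'-1$, so Lemma \ref{lem:estimate} applies with that exponent and the costs telescope: $t_{\min}(f,S_3)\leq (r'-1)+\bigl(2(n-1)-(r'-1)\bigr)+t_{\min}(g,S_3)<2n+\sqrt{2}\,(n-r')^{1.5}\leq 2n+(n-\sqrt{2n})\sqrt{2n}=\sqrt{2}\,n^{1.5}$, using the induction hypothesis on $g$, whose degree is $n-r'<n-\sqrt{2n}$. These two cases are exhaustive, so no chaining of several reduction phases and no auxiliary parameter is required; this two-case induction is precisely the paper's proof. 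Your opening move (applying Lemma \ref{lem:estimate} directly with $r=\nu(f)$ when $\nu(f)\geq\sqrt{2n}$) is sound but becomes superfluous once the dichotomy is set up on $r'$, because a large gap forces a large $(x+1)$-adic valuation after one pass of Lemma \ref{lem:n-r}, which is the only way large valuation is ever needed.
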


\begin{proof}
Let $n=\deg(f)$. We proceed by induction on $n$. If $n=0$
then $f=1$ and the stopping time is $0$, so the claim holds.

In the inductive step, let $f(x)=x^{n}+g$, where $\deg(g)=n-r$ and
$r>0$. 

By Lemma \ref{lem:n-r}, we have $t_{{\rm min}}(f,S_{3})=r-1+t_{{\rm min}}\left((x+1)^{r-1}g,S_{3}\right)$.
We distinguish between two cases:
\begin{itemize}
\item Case 1: $r\leq\sqrt{2n}$. We use the inductive hypothesis: $\deg\left((x+1)^{r-1}g\right)=n-1$,
so 
\[
t_{{\rm min}}(f,S_{3})\leq r-1+(n-1)\sqrt{2(n-1)}<\sqrt{2n}+(n-1)\sqrt{2n}=n\sqrt{2n}.
\]
\item Case 2:  $r>2\sqrt{n}$. By Lemma \ref{lem:estimate}, 
\begin{align*}
t_{{\rm min}}(f,S_{3}) & =r-1+t_{{\rm min}}\left((x+1)^{r-1}g,S_{3}\right)\\
 & \leq r-1+2(n-1)-(r-1)+t_{{\rm min}}(g,S_{3})\\
 & <2n+t_{{\rm min}}(g,S_{3}).
\end{align*}
By the inductive assumption, we get
\begin{align*}
t_{{\rm min}}(f,S_{3}) & <2n+(n-r)\sqrt{2(n-r)}\\
 & \leq2n+(n-r)\sqrt{2n}\\
 & <2n+(n-\sqrt{2n})\sqrt{2n}=n\sqrt{2n}.
\end{align*}
Hence, our induction is complete. \qedhere
\end{itemize}
\end{proof}

Combining this result with (\ref{eq:t_min}), we immediately get

\begin{coro}
For any odd $f \in \mathbb{F}_2[x]$, we have $t_{{\rm min}}(f)\leq(2\deg(f))^{1.5}+\deg(f)$.
\end{coro}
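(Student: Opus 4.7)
The plan is to derive this corollary as a direct consequence of the preceding theorem together with identity (\ref{eq:t_min}). Recall that (\ref{eq:t_min}) reads $t_{\min}(f) = 2 t_{\min}(\hat f, S_3) + \deg(f)$ for odd $f$, so the task reduces to substituting the $(\deg)^{1.5}$ bound for $t_{\min}(\hat f, S_3)$ and tidying up the constants.

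First I would check that the theorem may legitimately be applied to $\hat f$. Since $f$ is odd and nonzero, its leading coefficient in $\mathbb{F}_2[x]$ is $1$, so by property (1) of the reversing map the constant term of $\hat f$ is $1$; hence $\hat f$ is odd, as required. Moreover, by the very definition of $\hat{\phantom{f}}$ one has $\deg(\hat f) \leq \deg(f)$ (with equality when $f$ is odd, in fact). Applying the theorem to $\hat f$ then yields
\[
t_{\min}(\hat f, S_3) \leq \sqrt{2}\bigl(\deg(\hat f)\bigr)^{1.5} \leq \sqrt{2}\bigl(\deg(f)\bigr)^{1.5}.
\]

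Substituting into (\ref{eq:t_min}) gives
\[
t_{\min}(f) \leq 2\sqrt{2}\bigl(\deg(f)\bigr)^{1.5} + \deg(f),
\]
and the identity $2\sqrt{2} = 2^{3/2}$ repackages the first term as $(2\deg(f))^{1.5}$, yielding the claimed inequality. There is essentially no obstacle here: the content of the corollary is entirely in the theorem, and the corollary merely records the translation from the auxiliary stopping time $t_{\min}(\cdot, S_3)$ back to the original Collatz stopping time $t_{\min}$ via the reduction developed earlier in the section.
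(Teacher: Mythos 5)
Your proposal is correct and matches the paper's argument, which likewise deduces the corollary immediately from the preceding theorem together with (\ref{eq:t_min}); your additional checks (that $\hat f$ is odd and that $\deg(\hat f)=\deg(f)$ for odd $f$) are exactly the details the paper leaves implicit, and the constant bookkeeping $2\sqrt{2}=2^{1.5}$ is right.
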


Finally, if $f$ is an even polynomial of degree $n$, letting $f=x^{r}g$ with $g$ odd,
we have $T^{r}(f)=g$, so
\begin{align*}
t_{{\rm min}}(f) & =r+t_{{\rm min}}(g)\leq r+(2(n-r))^{1.5}+n-r\\
 & =(2(n-r))^{1.5}+n<(2n)^{1.5}+n.
\end{align*}

Thus:
\begin{theo}
For any nonzero $f\in \mathbb{F}_2[x]$, we have $t_{{\rm min}}(f)\leq(2\deg(f))^{1.5}+\deg(f)$.
\end{theo}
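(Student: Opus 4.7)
The plan is to handle the general statement as a short reduction from the odd case, which is already packaged in the preceding corollary. The only new ingredient is the behavior on even polynomials.

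First I would split on the parity of $f$. If $f$ is odd, the bound is literally the preceding corollary. If $f$ is even of degree $n$, I would write $f = x^{r} g$ with $g$ odd and $r \geq 1$, so that $\deg(g) = n - r$. Because $T$ sends every even polynomial to its quotient by $x$, the first $r$ iterates of $T$ simply strip off the factors of $x$, giving $T^{r}(f) = g$. Hence $t_{\min}(f) = r + t_{\min}(g)$, and applying the corollary to the odd polynomial $g$ yields
$$t_{\min}(f) \;\leq\; r + (2(n-r))^{1.5} + (n-r) \;=\; (2(n-r))^{1.5} + n \;\leq\; (2n)^{1.5} + n,$$
using monotonicity of $(2t)^{1.5}$ in $t$ together with $n - r \leq n$.

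I do not expect a real obstacle here: the substantive work — extracting the $1.5$ exponent by induction on $\deg(f)$ with a case split according to whether $r \leq \sqrt{2n}$ or $r > 2\sqrt{n}$, and transferring from $t_{\min}(\cdot, S_3)$ to $t_{\min}(\cdot)$ via the identity $t_{\min}(f) = 2 t_{\min}(\hat{f}, S_3) + \deg(f)$ — has already been carried out in the earlier theorem and its corollary. The present statement is essentially a one-line packaging step that records how even polynomials fit into the final bound.
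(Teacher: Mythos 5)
Your proposal is correct and follows exactly the paper's own argument: the odd case is the preceding corollary, and the even case is handled by writing $f = x^{r}g$ with $g$ odd, using $T^{r}(f) = g$ to get $t_{\min}(f) = r + t_{\min}(g)$, and then applying the corollary to $g$ together with monotonicity. No gaps.
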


This yields Theorem \ref{main} of the introduction.

\section{Arithmetic progressions in stopping times of the Collatz map}\label{sec:arithmetic_progression}
In this section, we establish Theorem \ref{main2}, which is a positive characteristic analogue of Conjecture \ref{conj_main}, as mentioned in the introduction.

Consider the polynomials in Theorem \ref{main2}: $$f_{a,b,n}=(x^a(x+1)^b)^n+1,$$ where $a,b$ are non-negative integers such that $(a,b)\neq(0,0)$ and $n\geq 1$. Recall that Theorem \ref{main2} asserts that, for all $a,b$, there exist arithmetic sequences of unbounded length in $(t_{{\rm min}}(f_{a,b,n}))_{n \geq 1}$. We will prove it by finding these stopping times explicitly.

In the following, we consider the mapping $S_3$ (Definition \ref{defi:s1s2}) and the transform $\:{\widehat{}}\:$ (\eqref{eq:rever}) from Section \ref{sec:stopping_time}.

\begin{lem} \label{lem:fab_reduce}
We have
$$ t_{{\rm min}}(f_{a,b,n}) = 2t_{\rm min}((x+1)^{na+nb-1},S_3)+3na+nb-2.$$
\end{lem}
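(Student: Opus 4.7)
The plan is to apply the reversal machinery from Section \ref{sec:stopping_time} -- namely equation \eqref{eq:t_min} relating $t_{\min}(f)$ to $t_{\min}(\hat f,S_3)$, the multiplicativity of the coefficient-reversal $\widehat{\phantom{f}}$, and Lemma \ref{lem:n-r} -- to reduce $t_{\min}(f_{a,b,n})$ to the stopping time of $S_3$ starting from a pure power of $x+1$.

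Assume first that $a\ge 1$. Then $f_{a,b,n}=x^{na}(x+1)^{nb}+1$ is odd with $\deg f_{a,b,n}=na+nb$, and using properties (2)--(3) of the reversal (in particular $\widehat{x^{na}(x+1)^{nb}}=(x+1)^{nb}$ since $\widehat{x^{na}}=1$) one obtains by a direct computation
\[\widehat{f_{a,b,n}}\;=\;(x+1)^{nb}+x^{na+nb}.\]
Setting $N:=na+nb$ and $g:=(x+1)^{nb}$, the polynomial $g$ is odd of degree $nb=N-na$, so Lemma \ref{lem:n-r} applies with $r=na>0$ and gives
\[t_{\min}(\widehat{f_{a,b,n}},S_3)\;=\;na-1+t_{\min}\bigl((x+1)^{na-1}\cdot(x+1)^{nb},S_3\bigr)\;=\;na-1+t_{\min}((x+1)^{na+nb-1},S_3).\]
Substituting this into \eqref{eq:t_min} and collecting terms yields
\[t_{\min}(f_{a,b,n})\;=\;2(na-1)+2t_{\min}\bigl((x+1)^{na+nb-1},S_3\bigr)+(na+nb)\;=\;2t_{\min}\bigl((x+1)^{na+nb-1},S_3\bigr)+3na+nb-2,\]
which is precisely the claim.

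The remaining case $a=0$ (and hence $b\ge 1$) requires slightly more care, because $f_{0,b,n}=(x+1)^{nb}+1$ is even and so \eqref{eq:t_min} cannot be applied directly. Here the plan is to first strip the largest power $x^s$ dividing $f_{0,b,n}$ -- contributing $s$ to $t_{\min}$ -- and then apply the same reversal-plus-Lemma \ref{lem:n-r} argument to the resulting odd quotient; the counts of $s$ and of the corresponding exponent $r$ have to be tracked carefully to recover the same formula. This edge-case bookkeeping is the only delicate point; once Lemma \ref{lem:n-r} is in hand, the $a\ge 1$ calculation is essentially a one-line substitution.
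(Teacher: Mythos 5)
For $a\ge 1$ your argument is correct and is essentially the paper's own proof: the paper likewise starts from (\ref{eq:t_min}), computes $\widehat{f_{a,b,n}}=(x+1)^{nb}+x^{na+nb}$, and then uses Lemma \ref{lem:iterations_formula} to show $S_3^{na}(\widehat{f_{a,b,n}})=S_3\big((x+1)^{na+nb-1}\big)$ --- which is exactly the identity you obtain by citing Lemma \ref{lem:n-r} with $r=na$. Same reduction, same arithmetic.

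The gap is the case $a=0$, which you correctly flag ($f_{0,b,n}=(x+1)^{nb}+1$ vanishes at $0$, so (\ref{eq:t_min}) as stated does not apply --- a point the paper's proof silently skips over) but then leave as an unfinished sketch. Two remarks on closing it. First, no delicate bookkeeping is needed: if $f=x^sg$ with $g$ odd, then $t_{\min}(f)=s+t_{\min}(g)$, $\hat f=\hat g$ and $\deg f=\deg g+s$, so (\ref{eq:t_min}) holds verbatim for every nonzero $f$; the contribution $s$ is absorbed into $\deg f$ automatically. Second, your plan to then run ``the same reversal-plus-Lemma \ref{lem:n-r} argument'' does not go through literally when $a=0$: here $\widehat{f_{0,b,n}}=(x+1)^{nb}+x^{nb}$, whose top terms cancel, so it is not of the form $x^N+g$ with $\deg g<N$ and Lemma \ref{lem:n-r} (which needs $r=na>0$) is inapplicable. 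Instead one observes directly that $(x+1)^{nb}+x^{nb}=S_2\big((x+1)^{nb}\big)=S_3\big((x+1)^{nb-1}\big)$, whence $t_{\min}(\widehat{f_{0,b,n}},S_3)=t_{\min}\big((x+1)^{nb-1},S_3\big)-1$ for $nb\ge 2$, which yields the stated formula with $na=0$. (Both your argument and the paper's also tacitly assume $na+nb\ge 2$; for $na+nb=1$ the displayed formula actually fails, e.g.\ $t_{\min}(f_{1,0,1})=t_{\min}(x+1)=3$, not $1$.)
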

\begin{proof}  
By (\ref{eq:t_min}) we have
\begin{equation} \label{eq:t_fabn}
t_{{\rm min}}(f_{a,b,n})=2 t_{{\rm min}}(\widehat{f_{a,b,n}},S_3)+n(a+b). 
\end{equation}
Let us find the transform $\widehat{f_{a,b,n}}$: By definition,
$$ \widehat{f_{a,b,n}}=x^{\deg ( f_{a,b,n})} f_{a,b,n}(1/x)= (x+1)^{nb}+x^{na+nb}.$$
By Lemma \ref{lem:iterations_formula},
$$ S_3^{na}(\widehat{f_{a,b,n}})=(x+1)^{na+nb}+x^{na+nb}=S_3((x+1)^{na+nb-1}).$$
Hence 
\begin{align*}
t_{\rm min}(\widehat{f_{a,b,n}},S_3)&=na+t_{\rm min}(S_3((x+1)^{na+nb-1}),S_3) \\
&= na-1+t_{\rm min}((x+1)^{na+nb-1},S_3). 
\end{align*}
Together with (\ref{eq:t_fabn}), we get the desired formula.
\end{proof}

Therefore, the calculation of $(t_{{\rm min}}(f_{a,b,n}))_{n\geq 1}$ is reduced to the calculation of \\ $(t_{{\rm min}}((x+1)^n,S_3))_{n\geq 1}$, which will be carried out in the following lemmas.

\begin{lem} \label{lem:expansion}
Let $n\geq 1$, and assume that $n$ has the following base-2 expansion:
$$n=\sum_{i=1}^{m}2^{t_i},\hspace{20pt}t_1<t_2<\dotsc<t_m.$$ Then the three leading terms of the polynomial $(x+1)^n\in \mathbb{F}_2[x]$ are
\begin{equation} \label{eq:binary}  
 (x+1)^n=x^n+x^{n-2^{t_1}}+x^{n-2^{t_2}}+\dotsc
 \end{equation}
\end{lem}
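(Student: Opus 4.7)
The plan is to expand $(x+1)^n$ as a binomial sum in $\mathbb{F}_2[x]$ and then read off which coefficients are nonzero via Lucas' theorem modulo $2$. Writing
\[
(x+1)^n=\sum_{k=0}^{n}\binom{n}{k}x^{n-k},
\]
the $x^{n-k}$ coefficient is $1\in\mathbb{F}_2$ iff $\binom{n}{k}$ is odd. By Lucas' theorem this happens iff the binary expansion of $k$ is a bitwise submask of the binary expansion of $n$. Given the hypothesis $n=\sum_{i=1}^{m}2^{t_i}$ with $t_1<t_2<\dots<t_m$, the odd-binomial indices $k$ are exactly the $2^m$ subset sums $k_S=\sum_{i\in S}2^{t_i}$, for $S\subseteq\{1,\dots,m\}$.

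It then remains to order these subset sums and pick out the three smallest (since smallest $k$ means largest $x^{n-k}$). The smallest is $k_{\emptyset}=0$, producing the leading term $x^n$. The next smallest is $k_{\{1\}}=2^{t_1}$, because any nonempty subset sum satisfies $k_S\geq\min_{i\in S}2^{t_i}\geq 2^{t_1}$, with equality only for $S=\{1\}$; this gives $x^{n-2^{t_1}}$. For the third smallest, assuming $m\geq 2$, I would observe that any $k_S$ with $2^{t_1}<k_S<2^{t_2}$ would have to be a subset sum using only the powers $2^{t_i}$ with $t_i<t_2$. The only such power is $2^{t_1}$, so the only subsets available are $\emptyset$ and $\{1\}$, giving values $0$ or $2^{t_1}$ and nothing strictly between $2^{t_1}$ and $2^{t_2}$. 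Hence the third smallest index is $k_{\{2\}}=2^{t_2}$, yielding the term $x^{n-2^{t_2}}$.

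This gives the claim as stated. The degenerate case $m=1$ (so $n=2^{t_1}$) is handled separately: one has $(x+1)^n=x^n+1$, which is consistent with the stated formula under the convention that only the listed terms which make sense are retained. There is no genuine obstacle here: the entire argument is a direct application of Lucas' theorem together with an elementary estimate on subset sums, and the only minor care needed is in ruling out a subset sum strictly between $2^{t_1}$ and $2^{t_2}$.
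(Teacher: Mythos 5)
Your proof is correct and takes essentially the same route as the paper: both arguments identify the support of $(x+1)^n$ over $\mathbb{F}_2$ with the subset sums $\sum_{i\in S}2^{t_i}$, $S\subseteq\{1,\dots,m\}$, and then order these to extract the three largest exponents; the only difference is that you obtain the support via Lucas' theorem, whereas the paper derives it directly from the factorization $(x+1)^n=\prod_{i=1}^m\bigl(x^{2^{t_i}}+1\bigr)$, which is just the standard proof of Lucas' theorem mod $2$. Your elementary argument ruling out a subset sum strictly between $2^{t_1}$ and $2^{t_2}$, and your remark on the degenerate case $m=1$ (which the paper's statement glosses over), are both fine.
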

\begin{proof}
We have 
\begin{align*}
(x+1)^n&=(x+1)^{\sum_{i=1}^m2^{t_i}}=\prod_{i=1}^m\left(x+1\right)^{2^{t_i}}=\prod_{i=1}^m\left(x^{2^{t_i}}+1\right)\\
&=\sum_{(\varepsilon_1,...,\varepsilon_m)\in \{0,1\}^m} x^{\varepsilon_12^{t_1}+\varepsilon_22^{t_2}+\dotsc+
\varepsilon_m2^{t_m}}.
\end{align*}
The exponents in this sum are ordered by the lexicographic order of the vectors $(\varepsilon_m,\varepsilon_{m-1},\dotsc,\varepsilon_1)$, so clearly the biggest one is $\sum_{i=1}^m 2^{t_i}=n$ (when $\varepsilon_i=1$ for all $i$), the next biggest one is $\sum_{i=2}^m 2^{t_i}=n-2^{t_1}$, and so on.
\end{proof}

\begin{lem}\label{lem:S3time}
Let $n\geq 1$ and assume that $2^{d-1} \leq n < 2^d$. Then we have $t_{{\rm min}}((x+1)^n, S_3)=2^d-n$.
\end{lem}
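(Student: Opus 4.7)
The approach is to track the sequence of degrees $m_k := \deg(S_3^k((x+1)^n))$ using Lemma \ref{lem:restriction}(2), which in this specialization reads $S_3^k((x+1)^n) = (x+1)^{n+k}|_{\leq m_k}$. The key arithmetic inputs are the Frobenius identity $(x+1)^{2^d} = x^{2^d}+1$ in $\mathbb{F}_2[x]$ and Lucas's theorem on the parity of binomial coefficients. For the upper bound, I take $k = 2^d - n$ and use the monotonicity $m_k \leq m_0 = n < 2^d$ from Lemma \ref{lem:restriction}(1): the truncation $(x+1)^{2^d}|_{\leq m_k}$ then retains only the constant term, so $S_3^{2^d - n}((x+1)^n) = 1$.

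For the matching lower bound, I will show by induction on $k$ that $m_k \geq 2^{d-1}$ for every $0 \leq k \leq 2^d - n - 1$; this forces $S_3^k((x+1)^n)$ to have positive degree and so to differ from $1$. The base case is $m_0 = n \geq 2^{d-1}$. For the inductive step, a direct expansion of $S_3((x+1)^{n+k}|_{\leq m_k}) = (x+1)\cdot(x+1)^{n+k}|_{\leq m_k} + x^{m_k+1}$ via Pascal's rule $\binom{n+k+1}{j} \equiv \binom{n+k}{j} + \binom{n+k}{j-1} \pmod 2$ sharpens the lemma to $S_3^{k+1}((x+1)^n) = (x+1)^{n+k+1}|_{\leq m_k}$ (truncation at the \emph{previous} degree), yielding the recurrence $m_{k+1} = \max\{\,j \leq m_k : \binom{n+k+1}{j}\text{ is odd}\,\}$. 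In the inductive range one has $n+k+1 \in [2^{d-1}+1,\, 2^d-1]$, so bit $d-1$ of $n+k+1$ is set; Lucas's theorem then gives $\binom{n+k+1}{2^{d-1}}$ odd, and the inductive hypothesis $2^{d-1} \leq m_k$ makes $j = 2^{d-1}$ admissible in the max, so $m_{k+1} \geq 2^{d-1}$.

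The step I expect to require the most care is the sharpened identity above: Lemma \ref{lem:restriction}(2) only asserts truncation at the current degree $m_{k+1}$, and verifying that the truncation can equivalently be taken at $m_k$ amounts to showing the leading-term cancellation in $S_3$ matches Pascal's rule up to degree $m_k+1$---a routine but essential binary bookkeeping. Once that is in place, the rest of the lower bound reduces to the observation that every integer in $[2^{d-1},\, 2^d-1]$ has its top bit at position $d-1$, and Lucas then traps $m_k$ above $2^{d-1}$ throughout the inductive range.
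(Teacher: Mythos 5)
Your proof is correct, but it takes a genuinely different route from the paper's. The paper proves the lemma by decomposing the binary expansion of $n$ into maximal blocks of consecutive $1$-bits, introducing the gap lengths $g_i$ and their partial sums $G_k$, and computing by induction the exact degree of $S_3^{G_k}((x+1)^n)$ at each of these milestones; the stopping time $2^d-n$ then falls out as $G_{m-1}$. You instead split the claim into two inequalities: the upper bound comes from the Frobenius identity $(x+1)^{2^d}=x^{2^d}+1$ together with the degree monotonicity of Lemma \ref{lem:restriction}(1) (this is essentially the mechanism already used in the proof of Lemma \ref{lem:estimate}, specialized to $g=1$), and the lower bound comes from the recurrence $m_{k+1}=\max\{j\leq m_k : \binom{n+k+1}{j}\ \text{odd}\}$ plus Lucas's theorem, which traps $m_k\geq 2^{d-1}$ as long as $n+k+1<2^d$. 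Your ``sharpened identity'' $S_3^{k+1}((x+1)^n)=(x+1)^{n+k+1}|_{\leq m_k}$ is valid --- it is exactly the computation appearing inside the paper's inductive proof of Lemma \ref{lem:restriction}(2), since $(x+1)h$ is monic of degree $\deg(h)+1$ over $\F_2$ --- and the edge cases ($n=2^d-1$, where the inductive range is empty, and the existence of the max via $\binom{n+k+1}{0}=1$) all check out. What each approach buys: yours is shorter and isolates the single arithmetic fact (the top bit of every integer in $[2^{d-1},2^d)$ is set) that makes the lower bound work, while the paper's block-by-block analysis yields the complete degree profile of the orbit, information that is stronger than what the lemma statement requires but is not used elsewhere in the paper.
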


\begin{proof}

Let $f=(x+1)^n$. Let us write the base-2 expansion of $n$ as $n=\sum_{i\in A}2^i$ for some subset $A\subset \mathbb N\cup \{0\}$, and group the elements of $A$ into intervals of consecutive integers:
$$A=\bigcup_{i=1}^m\{t_i,t_{i}+1,\dotsc ,s_{i}\},$$
where $0\leq t_1 \leq s_1 <t_2 \leq s_2 < \dotsc < t_{m} \leq s_m$  and $s_i \leq t_{i+1}-2$, for all $1 \leq i \leq m-1$.

For brevity, for any integers $a \leq b$, let us denote
$$S(a,b)=\sum_{i=a}^{b}2^i.$$
Note that $S(a,b)=2^{b+1}-2^a$. Then we have
\begin{equation}
n=\sum_{i=1}^m S(t_i,s_i). \label{eq:n_expansion}
\end{equation}
Consider the following numbers:
\begin{eqnarray*}
g_0&=&2^{t_1}, \\
g_1&=&S(s_1+1,t_2-1), \\
g_2&=&S(s_2+1,t_3-1),\\
&\vdots&\\
g_{m-1}&=&S(s_{m-1}+1,t_m-1).
\end{eqnarray*}
For $-1\leq k \leq m-1$, let us define $G_k=\sum_{i=0}^k g_i$. We will prove the following claims:
%\pagebreak
\begin{enumerate}
    \item For all $-1\leq k\leq m-1$, we have $\deg \left(S_3^{ G_k} (f)\right) = n-\sum_{i=1}^{k+1}S(t_i,s_i)$. 
    \item For all $0\leq k \leq m-1$, we have $\deg \left(S_3^{G_{k}-1} (f)\right)> \deg \left(S_3^{ G_{k}}(f)\right)$.
\end{enumerate}

The first claim will be proved by induction on $k$, and the second one will be proved as a byproduct of the inductive proof.

The case $k=-1$ is trivial: $G_{-1}=0$, so $S_3^{ G_{-1}}(f)=f$, and indeed $\deg(f) = n$.
Now, for any $-1\leq k \leq m-2$, let us assume that the first claim is true for $k$, and prove both claims for $k+1$.
Let $D=\deg \left(S_3^{ G_k} (f)\right)$. By our inductive hypothesis, $D= n-\sum_{i=1}^{k+1}S(t_i,s_i)$.
By Lemma \ref{lem:restriction}, we have 
\begin{equation} \label{eq:S3^G_k}
S_3^{ G_k}(f)=(x+1)^{G_k}f|_{\leq D}=(x+1)^{n+G_k}|_{\leq D}.
\end{equation}
Let us find the base-2 expansion of $n+G_k$:
\begin{align} 
n+G_k&=\sum_{i=1}^mS(t_i,s_i)+2^{t_1}+\sum_{i=1}^k S(s_i+1,t_{i+1}-1) \nonumber \\
&=2^{t_1}+\left (\sum_{i=1}^{k+1}S(t_i,s_i) + \sum_{i=1}^k S(s_i+1,t_{i+1}-1) \right )+\sum_{i=k+2}^mS(t_i,s_i) \nonumber \\
&=2^{t_1}+S(t_1,s_{k+1})+ \sum_{i=k+2}^m S(t_i,s_i) = 2^{s_{k+1}+1}+\sum_{i=k+2}^m S(t_i,s_i). \label{eq:n+G}
\end{align}
From this base-2 expansion and Lemma \ref{lem:expansion}, we have
\begin{equation} \label{eq:expansion_n+Gk}
(x+1)^{n+G_k}=x^{n+G_k}+x^{n+G_k-2^{s_{k+1}+1}}+x^{n+G_k-2^{t_{k+2}}}+\dotsc
\end{equation}
By (\ref{eq:n_expansion}) and (\ref{eq:n+G}), the second highest power in this expansion is
\begin{equation} \label{eq:second_power}
n+G_k-2^{s_{k+1}+1}=\sum_{i=k+2}^m S(t_i,s_i)=D.
\end{equation}
Hence, by (\ref{eq:S3^G_k}), (\ref{eq:expansion_n+Gk}) and (\ref{eq:second_power}):
$$S_3^{ G_k}(f)=x^{n+G_k-2^{s_{k+1}+1}}+x^{n+G_k-2^{t_{k+2}}}+\dotsc$$
The gap between the two leading powers in $S_3^{ G_k}(f)$ is therefore
$$2^{t_{k+2}}-2^{s_{k+1}+1}=S(s_{k+1}+1,t_{k+2}-1)= g_{k+1}.$$
We conclude:
$$ S_3^{ G_k}(f) = X^D+x^{D-g_{k+1}}+\dotsc $$
Letting $h=S_3^{ G_k}(f) +X^D$, we have $\deg(h) = D-g_{k+1}$. By Lemma \ref{lem:iterations_formula}, for all $1\leq i \leq g_{k+1}$ we have 
$$ S_3^{G_k+i}(f) = X^D + (x+1)^i h .$$
In particular, 
$$\deg \left(S_3^{G_{k+1}-1}(f)\right)= \deg \left(S_3^{G_{k}+g_{k+1}-1}(f)\right)=D>\deg \left(S_3^{G_{k}+g_{k+1}}(f)\right)=\deg \left(S_3^{ G_{k+1}}(f)\right).$$
This proves the second claim.
Finally, by (\ref{eq:S3^G_k}) and Lemma \ref{lem:restriction},
\begin{align}
S_3^{ G_{k+1}}(f)&= (S_3^{ g_{k+1}}\circ S_3^{ G_k})(f)  \nonumber \\
&= \left ((x+1)^{g_{k+1}} ( (x+1)^{n+G_k} |_{\leq D})\right ) |_{\leq D} \nonumber \\
&= ((x+1)^{n+G_k+g_{k+1}})|_{\leq D} = ((x+1)^{n+G_{k+1}})|_{\leq D}.  \label{eq:newdeg}
\end{align}
As equations (\ref{eq:expansion_n+Gk}) and (\ref{eq:second_power}) were proved independently of the inductive hypothesis, they hold for $k+1$ as well, hence
$(x+1)^{n+G_{k+1}}=x^{n+G_{k+1}}+x^{D'}+\dotsc$ where $D'=n-\sum_{i=1}^{k+2}S(t_i,s_i)$.
Since $D'<D$, we get by (\ref{eq:newdeg}) that $\deg \left(S_3^{ G_{k+1}}(f)\right)=D'$. Our induction is therefore complete.

Now, setting $k=m-1$, we get
$$\deg \left(S_3^{G_{m-1}-1}(f)\right)>\deg \left(S_3^{ G_{m-1}}(f)\right)=n-\sum_{i=1}^{m}S(t_i,s_i)=0.$$
Hence $t_{\rm min}(f,S_3)=G_{m-1}$. 
By (\ref{eq:n+G}), noting that $s_m=d-1$, we have $$n+G_{m-1}=2^{s_m+1}=2^d.$$
Hence $G_{m-1}=2^d-n$, and the proof of the lemma is complete.
\end{proof}

\begin{coro} \label{coro:stoppingtime}
Let $(a,b) \in (\mathbb{N}\cup\{0\}) \times (\mathbb{N}\cup\{0\}) \setminus \{(0,0)\}$ and $n \geq 1$. Let $d$ be an integer such that $2^d<n(a+b)\leq 2^{d+1}$. Then 
$$t_{\rm min}(f_{a,b,n})=2^{d+2}+(a-b)n.$$
\end{coro}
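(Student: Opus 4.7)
The corollary should follow by a straightforward combination of the reduction in Lemma \ref{lem:fab_reduce} with the explicit stopping-time formula of Lemma \ref{lem:S3time}. The only care needed is to translate the hypothesis $2^d < n(a+b) \leq 2^{d+1}$ into the binary-range condition required by Lemma \ref{lem:S3time}.

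First I would invoke Lemma \ref{lem:fab_reduce} to write
$$ t_{{\rm min}}(f_{a,b,n}) = 2\, t_{{\rm min}}\bigl((x+1)^{na+nb-1}, S_3\bigr) + 3na + nb - 2, $$
reducing the problem to computing the $S_3$-stopping time of a single power of $(x+1)$. Next, setting $m := na+nb-1$, I would observe that the hypothesis $2^d < n(a+b) \leq 2^{d+1}$ is equivalent to $2^d + 1 \leq na+nb \leq 2^{d+1}$, i.e. $2^d \leq m < 2^{d+1}$. This is exactly the condition appearing in Lemma \ref{lem:S3time} with its parameter $d$ replaced by $d+1$, and it therefore yields
$$ t_{{\rm min}}\bigl((x+1)^m, S_3\bigr) = 2^{d+1} - m = 2^{d+1} - (na+nb-1). $$

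Finally I would substitute this value back into the expression from Lemma \ref{lem:fab_reduce}: doubling gives $2\cdot 2^{d+1} - 2na - 2nb + 2 = 2^{d+2} - 2na - 2nb + 2$, and adding $3na + nb - 2$ produces $2^{d+2} + na - nb = 2^{d+2} + (a-b)n$, which is the claimed formula. The argument is essentially mechanical; I expect no real obstacle beyond the off-by-one bookkeeping that matches the two different binary-range conventions (and a harmless edge case when $na+nb \leq 1$, which the hypothesis on $d$ excludes).
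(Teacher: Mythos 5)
Your proof is correct and follows exactly the same route as the paper's: apply Lemma \ref{lem:fab_reduce}, then apply Lemma \ref{lem:S3time} to $(x+1)^{na+nb-1}$ after translating the hypothesis $2^d < n(a+b) \leq 2^{d+1}$ into $2^d \leq na+nb-1 < 2^{d+1}$, and simplify. The arithmetic matches the paper's line for line, so there is nothing to add.
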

\begin{proof}
Since $n,a,b,d$ are integers, we have $2^d\leq n(a+b)-1<2^{d+1}$. By Lemma \ref{lem:S3time}, we have $$t_{\rm min}((x+1)^{na+nb-1},S_3)=2^{d+1}-n(a+b)+1.$$ Hence, by Lemma \ref{lem:fab_reduce}, we get 
\begin{align*}
    t_{\rm min}(f_{a,b,n})&= 2t_{\rm min}((x+1)^{na+nb-1},S_3)+3na+nb-2 \\
    &= 2(2^{d+1}-n(a+b)+1)+3na+nb-2 \\
    &=2^{d+2}+na-nb. \qedhere
\end{align*}
\end{proof}

Finally, we prove Theorem \ref{main2} of the introduction concerning the existence of arithmetic sequences of unbounded length in $(t_{{\rm min}}(f_{a,b,n}))_{n \geq 1}$:

\begin{theo}
Let $(a,b) \in (\mathbb{N}\cup\{0\}) \times (\mathbb{N}\cup\{0\}) \setminus \{(0,0)\}$. The sequence $(t_{{\rm min}}(f_{a,b,n}))_{n \geq 1}$ has arithmetic sequences of unbounded length and with common difference $a-b$.
\end{theo}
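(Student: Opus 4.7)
The plan is to deduce Theorem \ref{main2} almost immediately from Corollary \ref{coro:stoppingtime}. For each $n \geq 1$, the corollary gives
$$ t_{\min}(f_{a,b,n}) = 2^{d(n)+2} + (a-b)n, $$
where $d(n)$ is the unique integer satisfying $2^{d(n)} < n(a+b) \leq 2^{d(n)+1}$. The key observation is that $d(n)$ is a non-decreasing step function of $n$, and on each maximal interval of consecutive integers where $d(n)$ is constant, the stopping time depends on $n$ in a purely affine way with slope exactly $a-b$.

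Concretely, I would fix $d \geq 0$ and consider
$$ I_d = \{ n \in \mathbb{N} : d(n) = d \} = \left\{ n \in \mathbb{N} : \tfrac{2^d}{a+b} < n \leq \tfrac{2^{d+1}}{a+b} \right\}. $$
This is an interval of consecutive positive integers whose cardinality equals $\lfloor 2^{d+1}/(a+b) \rfloor - \lfloor 2^d/(a+b) \rfloor$, and in particular is bounded below by $2^d/(a+b) - 1$. Writing the elements of $I_d$ in increasing order, the corresponding values $t_{\min}(f_{a,b,n}) = 2^{d+2} + (a-b)n$ form an arithmetic progression with common difference exactly $a-b$.

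To conclude, observe that since $a+b \geq 1$ (which is precisely where the hypothesis $(a,b) \neq (0,0)$ enters), the lower bound $2^d/(a+b) - 1$ tends to infinity with $d$. Hence for any prescribed length $m \geq 1$, one can choose $d$ large enough that $|I_d| \geq m$, and the corresponding $m$ consecutive terms of $(t_{\min}(f_{a,b,n}))_{n \geq 1}$ furnish the required arithmetic progression. There is no substantive obstacle here; the only item requiring care is the bookkeeping of the transition points of $d(n)$, which amounts to checking that $I_d$ contains consecutive integers and that $|I_d| \to \infty$.
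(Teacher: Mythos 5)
Your proposal is correct and follows essentially the same route as the paper: both partition $\mathbb{N}$ into the intervals where $2^d < n(a+b) \leq 2^{d+1}$, apply Corollary \ref{coro:stoppingtime} to get $t_{\min}(f_{a,b,n}) = 2^{d+2}+(a-b)n$ on each such interval, and note that the interval lengths $\lfloor 2^{d+1}/(a+b)\rfloor - \lfloor 2^d/(a+b)\rfloor$ tend to infinity. Your explicit lower bound $2^d/(a+b)-1$ on the interval length is a minor (and valid) addition the paper leaves implicit.
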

\begin{proof}
For any $d \geq \log_{2}(a+b)$ and  $  \left \lfloor \frac{2^d}{a+b}   \right \rfloor +1 \leq n \leq \left \lfloor \frac{2^{d+1}}{a+b}  \right \rfloor$, we have  $2^d<n(a+b)\leq 2^{d+1}$, so by Corollary \ref{coro:stoppingtime}, $t_{\rm min}(f_{a,b,n})=2^{d+2}+(a-b)n$. Hence,
$$\left(t_{{\rm min}}(f_{a,b,n})\right)_{n=\left\lfloor \frac{2^d}{a+b}\right\rfloor +1}^{\left\lfloor \frac{2^{d+1}}{a+b}\right\rfloor}= \left(2^{d+2}+(a-b)n\right)_{n=\left\lfloor \frac{2^d}{a+b}\right\rfloor +1}^{\left\lfloor \frac{2^{d+1}}{a+b}\right\rfloor},
$$ which is an arithmetic sequence with common difference $a-b$ and of length $\left\lfloor \frac{2^{d+1}}{a+b}\right\rfloor- \left\lfloor\frac{2^{d}}{a+b} \right\rfloor$. It remains to note that $\left\lfloor \frac{2^{d+1}}{a+b}\right\rfloor- \left\lfloor\frac{2^{d}}{a+b} \right\rfloor \rightarrow \infty$ as $d \rightarrow \infty$.
\end{proof}

\section{Open Questions}\label{sec:open}

We conclude this work by pointing out some natural open questions for further study.
For any $d \in \mathbb{N}$, let $$\sigma(d) = \max\{t_{{\rm min}}(f) \mid f \in \F_2[x],\deg(f) = d\}$$ be the maximal stopping time of polynomials of degree $d$. Theorem \ref{conj_main} provides a bound of $O(d^{1.5})$ on the asymptotic growth of $\sigma(d)$. We do not know whether this bound is optimal.

\begin{question}\label{worse}
What is the asymptotic growth of $\sigma(d)$? \end{question}

One can also consider the {\bf average} stopping time of polynomials of degree $d$:
$$\rho(d) = \frac{1}{2^{d}}\sum_{f \in \F_2[x], \deg(f) = d}t_{{\rm min}}(f).$$
As mentioned in the introduction, for the arithmetic Collatz map ${\cal{C}}$, it is conjectured that the average stopping time of integers of $d$ digits grows linearly in $d$. Experimental data suggests that the same holds for the Collatz map on $\F_2[x]$, but we do not have a proof of this. We thus formulate the following conjecture:

\begin{conj}\label{average}
The average stopping time $\rho(d)$ of the Collatz map $T$ on $\F_2[x]$ grows linearly in $d$.
\end{conj}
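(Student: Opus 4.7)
The conjecture asserts a linear upper bound $\rho(d) \le Cd$, matching the trivial lower bound $\rho(d) \ge d$ which follows because each application of $T$ changes the degree by exactly $\pm 1$ ($+1$ from $f \mapsto (1+x)f+1$, $-1$ from $f \mapsto f/x$), so any $f$ of degree $d$ needs at least $d$ steps to reach the degree-$0$ polynomial $1$. The entire content of the conjecture therefore lies in the matching upper bound. Writing an arbitrary even $f$ as $x^r g$ with $g$ odd gives $t_{\min}(f) = r + t_{\min}(g)$, and for uniform $f$ of degree $d$ the valuation $r$ contributes only $O(1)$ on average, so it suffices to handle odd $f$; using (\ref{eq:t_min}) the task reduces further to showing $E_f[t_{\min}(f, S_3)] = O(d)$ for $f$ uniform on odd polynomials of degree $d$. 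The guiding heuristic, visible from the analysis around Lemma \ref{lem:n-r}, is that the degree sequence $d_k = \deg(S_3^k(f))$ drops at step $k$ by $L_k - 1$, where $L_k$ is the length of the maximal suffix of coefficients equal to $1$ in $S_3^k(f)$; for a uniformly random odd polynomial $L - 1$ has a geometric distribution of mean exactly $1$, so $d_k$ ought to behave as a random walk with unit negative drift and reach $0$ in expected time $\Theta(d)$.

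The plan is to turn this heuristic into a drift-and-concentration argument. Starting from the closed form $S_3^k(f) = ((x+1)^k f)|_{\le d_k}$ of Lemma \ref{lem:restriction} and Lucas' theorem in the form $[x^i]((x+1)^k f) = \sum_{j \preceq k} a_{i-j} \pmod 2$ (with $\preceq$ denoting binary domination on base-$2$ expansions), the key step is a \emph{drift lemma} asserting that, uniformly in the history $d_0, d_1, \ldots, d_{k-1}$, the conditional law of $L_k - 1$ is stochastically bounded below by a geometric distribution of positive mean $c$. Granted such a statement, optional stopping applied to the supermartingale $d_k + ck$ immediately yields $E[\tau(f)] \le d/c$, and an Azuma-type concentration inequality promotes this to a tail bound $P(\tau(f) > Cd) = e^{-\Omega(d)}$ from which the conjectured linear growth of $\rho(d)$ follows.

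The main obstacle is the drift lemma itself: the iterates $S_3^k(f)$ are deterministic functions of the initial polynomial, and conditioning on the degree history $(d_0, \ldots, d_{k-1})$ restricts $f$ to an intricate subset of odd polynomials that is not obviously well-distributed. A plausible route is to exploit the fact that $(x+1)$ is a unit in every quotient $\F_2[x]/(x^m)$, so that multiplication by $(x+1)^k$ is a bijection on such quotients and should redistribute mass adequately on the top few coefficients; making this mixing quantitative, in the presence of the adaptive truncation, is the technical heart of the problem. An alternative, purely combinatorial route would be to enumerate all admissible degree paths $d = d_0 > d_1 > \cdots > d_\tau = 0$ realizable by $S_3$-dynamics, bound the number of odd polynomials of degree $d$ giving rise to each path, and show that atypically long paths correspond to exponentially few polynomials.
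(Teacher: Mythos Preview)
The paper does not prove this statement: it is explicitly posed as an open conjecture in \S\ref{sec:open}, motivated only by experimental data (``we do not have a proof of this''). There is therefore no proof in the paper to compare your proposal against.

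Your proposal is also not a proof. The reductions in your first paragraph are correct and indeed follow from the paper's own machinery (equation \eqref{eq:t_min}, Lemma \ref{lem:n-r}, Lemma \ref{lem:restriction}), and the heuristic that the degree sequence $d_k = \deg(S_3^k(f))$ should behave like a random walk with negative drift is the natural one. But the entire content of the conjecture is precisely your ``drift lemma'', and you have not proved it; you have only restated the difficulty. Conditioning on the degree history $(d_0,\ldots,d_{k-1})$ restricts $f$ to a set defined by $k$ nonlinear constraints on its coefficients (each constraint involving the top few coefficients of $(x+1)^j f$ for various $j$), and there is no reason to expect the remaining top coefficients of $(x+1)^k f$ to be close to uniform on that set. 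Your suggested remedy---that multiplication by $(x+1)^k$ is a bijection on $\F_2[x]/(x^m)$---does not help, because the truncation level $m = d_k$ is itself adaptive and determined by the same polynomial, so the bijection does not act on a fixed probability space. The alternative counting route you sketch (enumerate degree paths, bound the fiber over each) is likewise only a restatement of the problem.

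In short: your outline identifies the right quantity to control and the right probabilistic framework, but it stops exactly where the real work begins. As it stands it is a reasonable research plan, not a proof, and the paper makes no claim to have one either.
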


In contrast, regarding Question \ref{worse}, we suspect that the growth of $\sigma(d)$ is {\bf not} linear in $d$: Computer experiments run by Alon Rosenperl suggest that $\sigma(d) \geq d\cdot \log(d)$ for any large enough $d$. We do not have a proof of these findings. 

In the recent work \cite{BP23}, the main result of Hicks et al. was generalized from $\F_2[x]$ to $\F_p[x]$, for any prime $p$, by considering the following polynomial Collatz map $T$ on $\F_p[x]$:

$$
T(f)=\begin{cases}
f \cdot (x+1)-f_0 &\textrm{if}\,\, f_0 \neq 0,\\
\frac{f}{x} & \textrm{otherwise}.
\end{cases}
$$
(Clearly, for $p = 2$, this map coincides with the map $T$ of Hicks et al.) A polynomial $f \in \F_p[x]$ is called {\bf periodic} (with respect to $T$) if there exists $n \geq 1$ such that $T^n(f) = f$. A polynomial $f \in \F_p[x]$ is called {\bf pre-periodic} (with respect to $T$) if there exists $k \geq 1$ such that $T^k(f)$ is periodic. 

Let $p > 2$ be a prime. It is observed in \cite{BP23} that the $T$-iterations of non-zero polynomials in $\F_p[x]$ do not always contain $1$, however, every polynomial $f \in \F_p[x]$ is pre-periodic with respect to $T$. Let us define the stopping time of a polynomial $f \in \F_p[x]$ as the minimal $k$ for which $T^k(f)$ is periodic. The main result of \cite{BP23} is a quadratic bound on the stopping time: If $f \in \F_p[x]$ is a non-zero polynomial of degree $d$, then the stopping time of $f$ is at most $p(d^2+d)-d$ \cite[Theorem 1.1]{BP23}. We do not know whether this bound can be improved, for any $p > 2$. We thus pose the following question, which generalizes Question \ref{worse}: 

\begin{question}Let $p$ be a prime. What is the asymptotic growth in $d$ of the maximal stopping time of polynomials in $\F_p[x]$ of degree $d$? \end{question}

Similarly, one may ask:

\begin{question}Let $p$ be a prime. What is the asymptotic growth in $d$ of the average stopping time of polynomials in $\F_p[x]$ of degree $d$? \end{question}

\vspace{20pt}

\end{document}